\pdfoutput=1
\documentclass[12pt, oneside]{amsart}
\usepackage{amsmath,amssymb}
\usepackage{longtable}
\usepackage{booktabs,array}
\usepackage{microtype}
\usepackage{placeins}
\usepackage{tikz}
\usepackage{etoolbox}
\usepackage[textwidth=6in, textheight=9in,marginpar=0.75in]{geometry}
\usepackage[colorlinks=true,linkcolor=blue,citecolor=blue,urlcolor=blue]{hyperref}

\newtheorem{theorem}{Theorem}[section]
\newtheorem{proposition}[theorem]{Proposition}
\newtheorem{lemma}[theorem]{Lemma}
\newtheorem{corollary}[theorem]{Corollary}

\theoremstyle{definition}

\newtheorem{remark}[theorem]{Remark}

\newcommand{\bbz}{{\mathbb Z}}
\newcommand{\bbq}{{\mathbb Q}}
\newcommand{\dbc}{\Sigma}

\newcommand{\kn}[2]{\textup{#1#2}}
\newcommand{\para}[1]{\par\medskip\noindent\textbf{#1}\par\nobreak\smallskip\noindent\ignorespaces}
\newcommand{\coderepo}{\url{https://github.com/Pinocchio315/negative-definite-plumbings}}
\newcommand{\nNewUtwo}{41}

\newcommand{\nAdopted}{43}
\newcommand{\nObstrD}{35}
\newcommand{\nObstrHF}{6}
\newcommand{\nBadOne}{28}
\newcommand{\nBadTwo}{15}
\newcommand{\nNegDefBadOne}{147}
\newcommand{\nNegDefBadTwo}{33}
\newcommand{\nIdentified}{43}
\newcommand{\nUthreeAll}{66}

\newcommand{\nUthreeNonalt}{47}
\newcommand{\nOpenUonetwo}{437}
\newcommand{\nOpenUtwothree}{713}
\newcommand{\nOpenAll}{1{,}150}
\newcommand{\nHypCert}{490}
\newcommand{\nMixed}{149}
\newcommand{\nNoDef}{331}
\newcommand{\nNoDefRec}{299}
\newcommand{\nNegDef}{180}
\newcommand{\nGraphFour}{32}
\newcommand{\nNegDefOne}{59}
\newcommand{\nNegDefTwo}{121}
\newcommand{\nNoTest}{16}
\newcommand{\nUthreeNoCond}{57}
\newcommand{\nUthreeSigned}{9}
\newcommand{\nUthreeSigmaZero}{13}
\newcommand{\nBFTested}{2{,}264}
\newcommand{\nBFObstructed}{96}
\newcommand{\nBFKnownThree}{30}
\newcommand{\nBFNegCtrl}{9{,}673}
\newcommand{\nBFPubCtrl}{25}
\newcommand{\nRefAlt}{4{,}878}
\newcommand{\nDetCand}{13{,}346}
\newcommand{\nExclCross}{11{,}903}
\newcommand{\nExclAlex}{1{,}379}
\newcommand{\nExclCover}{57}
\newcommand{\nMinDiag}{7}
\newcommand{\nNbrChanges}{91}
\newcommand{\nNbrTypes}{8}
\newcommand{\nTwelveRef}{1{,}288}
\newcommand{\nTwelveDiag}{18}
\newcommand{\nTwelveChanges}{216}

\newcommand{\nPosVol}{639}
\newcommand{\nNoPlumb}{970}

\newcommand{\nAlgCheck}{3{,}848}
\newcommand{\nAlgTrees}{316}
\newcommand{\nAlgThm}{33}
\newcommand{\nAlgCrossMin}{14}
\newcommand{\nAlgCrossMax}{16}
\newcommand{\nThmCNoPlumb}{45}
\newcommand{\nNewAll}{112}
\newcommand{\nClassicalNew}{69}
\newcommand{\nCtrl}{469}
\newcommand{\nCtrlFourTwo}{156}
\newcommand{\nCtrlFourThree}{38}
\newcommand{\nCtrlFourThreeBoth}{33}
\newcommand{\nCtrlFourThreeExp}{4}
\newcommand{\nCtrlAlt}{350}
\newcommand{\nCtrlAltBoth}{33}
\newcommand{\nCtrlAltGoeritz}{34}
\makeatletter
\def\LT@makecaption#1#2#3{%
  \LT@mcol\LT@cols c{\hbox to\z@{\hss\parbox[t]\textwidth{\reset@font\@tempcnta=64\relax
    \@makecaption{#1{#2}}{#3}}\hss}}}
\makeatother

\makeatletter
\patchcmd{\@settitle}{\bfseries}{\normalfont\scshape}{}%
  {\ClassError{amsart}{Could not set the title in small caps}{}}
\patchcmd{\@settitle}{\uppercasenonmath\@title}{}{}%
  {\ClassError{amsart}{Could not preserve title capitalization}{}}
\patchcmd{\@setauthors}{\MakeUppercase{\authors}}{{\scshape\authors}}{}%
  {\ClassError{amsart}{Could not set the author name in small caps}{}}
\g@addto@macro\@setauthors{%
  \vspace{-1.5ex}
  \begin{center}
    \normalfont\footnotesize
    Center for Geometry and Physics, Institute for Basic Science,\\
    Pohang 37673, South Korea
  \end{center}
}
\makeatother

\hypersetup{pdftitle={Negative-Definite Plumbings and New Unknotting Numbers},pdfauthor={Seong-Jin Lee}}

\begin{document}

\title[Negative-Definite Plumbings and New Unknotting Numbers]{Negative-Definite Plumbings\\
  and New Unknotting Numbers}
\author[Seong-Jin Lee]{Seong-Jin Lee}

\begin{abstract}
We use Neumann's normal form to decide whether the double branched cover of a knot bounds a negative-definite plumbing
tree in either orientation, starting from the recognised Seifert or graph-manifold structure of the cover or from an
algebraic diagram of the knot. For many non-alternating knots with at most thirteen crossings whose unknotting numbers
were unknown, this identifies the cases in which the correction terms of the cover can be computed from such a plumbing,
and shows for all the others that neither orientation of the cover bounds one. We determine \nNewAll{} new unknotting numbers. For \nAdopted{}
of them, the lower bound rests on Heegaard Floer homology computed from the plumbing; for the other \nClassicalNew{}, it
comes from linking pairings and from a theorem of McCoy and Park on special alternating knots. Finally, we show that the
knot \kn{13n}{1587} is a further explicit counterexample to the Bernhard--Jablan conjecture, besides \kn{13n}{3370}.
\end{abstract}

\maketitle

\section{Introduction}\label{sec:intro}

The unknotting number $u(K)$ of a knot $K$ is the minimal number of crossing changes needed to turn a diagram of $K$
into a diagram of the unknot, where the minimum is taken over all diagrams of $K$. Although it is easy to define, it is
hard to compute: for many knots with at most thirteen crossings only a range of values is known \cite{KnotInfo,Lee}.
Upper bounds are obtained from explicit crossing changes, and lower bounds from knot invariants. For small values of
$u(K)$, lower bounds can be obtained from the double branched cover $\dbc(K)$. By the Montesinos trick, if $K$ can be
unknotted by $n$ crossing changes, then $\dbc(K)$ is obtained by surgery on an $n$-component link in $S^3$ with
half-integral framings \cite{Montesinos}, and the correction terms of Heegaard Floer homology obstruct the existence of
such surgery descriptions \cite{OS05,Owens}. Applying these obstructions requires the Floer homology of $\dbc(K)$.

In \cite{Lee}, the correction terms of $\dbc(K)$ were obtained from the Goeritz matrix for alternating knots
\cite{OS05}, from Greene's spanning tree model when $\dbc(K)$ is an $L$-space \cite{Greene}, and from
negative-definite star-shaped plumbings for Montesinos knots, whose double branched covers are Seifert fibred
\cite{OSplumbed,Nemethi}. These computations left the unknotting numbers of many non-alternating knots with at most
thirteen crossings undetermined. For these knots we start from the cover itself. We recognise $\dbc(K)$ with Regina
\cite{Regina}, and when it is a Seifert fibred space or a graph manifold, possibly with several Seifert pieces, we
recover a plumbing tree from its Seifert invariants and gluing data; no Montesinos presentation of the knot is needed.
When Regina does not recognise the cover, we read a plumbing tree from an algebraic diagram of the knot instead
(Lemma~\ref{lem:algebraic}), using the description of the double branched covers of arborescent knots by Bonahon and
Siebenmann \cite{BS}.

Our approach rests on Neumann's normal form and his theorem on the positive index of plumbing trees
\cite{Neumann81,Neumann89}. For an oriented graph manifold that is a rational homology sphere and whose normal form is
a tree, the normal form is negative definite exactly when the manifold bounds a negative-definite plumbing tree
(Corollary~\ref{cor:negdef}). We construct candidate normal forms from the recognised data and identify their plumbed
manifolds with $\pm\dbc(K)$ by triangulation, or we obtain the normal forms from an algebraic diagram. When the normal
forms of both $\dbc(K)$ and $-\dbc(K)$ are found and
neither is negative definite, this proves that no orientation of $\dbc(K)$ bounds a negative-definite plumbing tree.
The search for candidates is bounded, but the conclusions rest only on the identified normal forms and on Neumann's
theorem. When a negative-definite normal form exists, its lattice gives the correction terms of $\dbc(K)$ if it has at
most two bad vertices \cite{OSplumbed}, and the reduced Floer homology if it has at most one \cite{OSplumbed,Nemethi}.

Let $\mathcal S$ be the set of the \nOpenAll{} non-alternating knots with at most thirteen crossings whose unknotting
number lies in $[1,2]$ (\nOpenUonetwo{} knots) or in $[2,3]$ or $[2,4]$ (\nOpenUtwothree{} knots) according to
\cite{Lee}. Our first result describes for which of these knots the plumbing calculation is available.

\begin{proposition}\label{prop:classes}
The set $\mathcal S$ is the disjoint union of subsets $\mathcal P$ and $\mathcal N$ with $|\mathcal P|=\nNegDef$ and
$|\mathcal N|=\nNoPlumb$, such that the following hold. For every $K\in\mathcal P$, the normal form of $\dbc(K)$ or of
$-\dbc(K)$ is negative definite and has at most two bad vertices. For every $K\in\mathcal N$, neither $\dbc(K)$ nor
$-\dbc(K)$ bounds a negative-definite plumbing tree.
\end{proposition}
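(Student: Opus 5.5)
The proof is a computer-assisted classification, and I would organise it knot by knot over the \nOpenAll{} elements of $\mathcal S$. For each $K\in\mathcal S$ I first obtain a plumbing description of $\pm\dbc(K)$, by one of two routes. In the first route, Regina recognises $\dbc(K)$ from a triangulation built from the knot diagram as a Seifert fibred space or a graph manifold. From the Seifert invariants of each piece and the gluing matrices I write down a plumbing tree: a star for each Seifert piece, with chains joining the pieces. When Regina fails, I use the second route and read a plumbing tree directly from an algebraic (arborescent) diagram of $K$ via Lemma~\ref{lem:algebraic}, which rests on the Bonahon--Siebenmann description of double branched covers of arborescent knots. In either case I then reduce the tree, and the tree obtained from it by negating all weights and orientation data (which represents $-\dbc(K)$), to Neumann's normal form by the blow-down, chain-absorption and splitting moves. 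When the plumbing came from recognised data rather than from the diagram, I certify that the plumbed manifold is $\pm\dbc(K)$ by building a triangulation of the plumbing and testing homeomorphism with Regina. The first route needs this step because its orientation conventions are error prone.

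Next I sort the knots. Since Neumann's normal form is unique, I compute for each normal form the positive index of its intersection lattice. If the normal form of $\dbc(K)$ or of $-\dbc(K)$ is negative definite, I count its bad vertices, meaning vertices whose valence exceeds minus their weight. If the count is at most two, I place $K$ in $\mathcal P$. If neither normal form is negative definite, I place $K$ in $\mathcal N$. The justification for $\mathcal N$ is Corollary~\ref{cor:negdef}: $\dbc(K)$ is a rational homology sphere (since $\det K\neq0$), and its normal form is a tree. If it bounded a negative-definite plumbing tree, that tree would reduce to a negative-definite normal form, by Neumann's theorem that the positive index is invariant under the moves. The same argument applies to $-\dbc(K)$. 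The two subsets are disjoint by construction, since membership in $\mathcal P$ requires a negative-definite normal form and membership in $\mathcal N$ forbids one.

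The claim then reduces to a finite verification: every knot must land in exactly one class, and the counts must be \nNegDef{} and \nNoPlumb{}. Two things have to be checked. First, no knot has a negative-definite normal form with three or more bad vertices. Second, no knot is left without an identified normal form in both orientations. I expect the main obstacle to be the second point, completeness, for the knots whose covers Regina does not recognise, or recognises only after a long search. For these I would try first to find an algebraic diagram among the diagrams in the knot tables or their flype and Reidemeister variants, and apply Lemma~\ref{lem:algebraic}, since that route gives the normal forms of both orientations with no identification step. If a knot with three or more bad vertices did appear, the statement as phrased would fail for it. I would then look for a different negative-definite normal form; none exists, by uniqueness, so such a knot would have to be recorded separately. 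The stated counts implicitly assert that this never happens, and the computation has to confirm it.
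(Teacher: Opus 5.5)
Your plan has a genuine gap. It assumes that for every $K\in\mathcal S$ the cover $\dbc(K)$ has a plumbing description, so that the normal forms of both orientations exist and only have to be found and sorted. That is false for most of $\mathcal N$. For \nPosVol{} of its \nNoPlumb{} knots, $\dbc(K)$ has positive simplicial volume: it is hyperbolic for \nHypCert{} of them, and toroidal with a hyperbolic JSJ piece for \nMixed{}. These covers bound no plumbing tree at all, so there is no normal form to compute. Your fallback cannot produce one either, because Lemma~\ref{lem:algebraic} shows that any knot with an algebraic diagram has a plumbed double branched cover. The completeness step you single out as the main obstacle is therefore not merely hard; it fails outright. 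Your procedure can classify at most the \nNegDef{} $+$ \nNoDef{} knots whose covers are graph manifolds.

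The missing ingredient is a positive certificate that these covers are not graph manifolds, together with a reason why that excludes plumbings. Regina's failure to recognise a cover is no such certificate: the \nGraphFour{} covers with four Seifert pieces were not recognised either. The paper certifies hyperbolicity in SnapPy by a verified interval-arithmetic solution of the gluing and Dehn filling equations. It first drills a short geodesic and refills it, to obtain a triangulation that admits a geometric solution. For the toroidal covers, it finds incompressible tori among the vertex normal surfaces and certifies one piece hyperbolic in the same way. Gromov's additivity for gluings along incompressible tori then gives positive simplicial volume, whereas plumbed manifolds have simplicial volume zero by Soma's theorem. Since simplicial volume does not depend on orientation, this rules out every plumbing tree, negative definite or not, for $\dbc(K)$ and $-\dbc(K)$ at once.

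A smaller point: your justification of Corollary~\ref{cor:negdef} misstates Neumann's theorem. The positive index is not invariant under the moves, since blow-downs with $\varepsilon=+1$ and $0$-chain absorption change it. What holds is Theorem~\ref{thm:neumann}: the normal form has the fewest positive eigenvalues among all plumbing trees of the manifold.
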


For \nPosVol{} knots of $\mathcal N$, the cover $\dbc(K)$ has positive simplicial volume: it is hyperbolic for
\nHypCert{} knots, and toroidal with a hyperbolic piece in its JSJ decomposition for \nMixed{} knots; no orientation of
such a cover bounds any plumbing tree. For the other \nNoDef{} knots of $\mathcal N$, the cover is a graph manifold, and
the intersection matrices of the normal forms of both orientations have positive eigenvalues. For \nGraphFour{} of them,
Regina does not recognise the cover, which has four Seifert pieces; these knots are arborescent, and we obtain the
normal forms from algebraic diagrams (Section~\ref{sec:geometry} and Table~\ref{tab:map}). Thus the lattice formula of
Ozsv\'ath and Szab\'o applies to every knot of $\mathcal P$ and to no knot of $\mathcal N$; for the latter, the
correction terms of $\dbc(K)$ must be computed by other means.

The second result determines new unknotting numbers. The algebraic unknotting number $u_a(K)$ is the least number of
crossing changes that turn $K$ into a knot with Alexander polynomial one, and the four-dimensional clasp number
$c_4(K)$ is the least number of double points of a normally immersed disc in $B^4$ bounded by $K$; both are at most
$u(K)$.

\begin{theorem}\label{thm:main}
\begin{enumerate}\renewcommand{\theenumi}{\roman{enumi}}\renewcommand{\labelenumi}{(\theenumi)}
\item\label{it:plumbing} The knots of Table~\ref{tab:new-u2} have unknotting number $2$, and those of
Table~\ref{tab:new-u3} have unknotting number $3$.
\item\label{it:rank2} The \nUthreeAll{} knots of Table~\ref{tab:u3-66} satisfy $u(K)=u_a(K)=3$.
\item\label{it:u5} The special alternating knots \kn{13a}{4770}, \kn{13a}{4787} and \kn{13a}{4801} satisfy
$u(K)=c_4(K)=5$.
\end{enumerate}
\end{theorem}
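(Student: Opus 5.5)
Each part of the theorem needs an upper bound and a matching lower bound. The upper bounds are the same throughout: an explicit diagram of each knot in which the stated number of crossing changes produces the unknot, certified by computer identification of the result (SnapPy/Regina recognise $S^3$ as the double branched cover, or a standard unknot recognition routine is run on the modified diagram). Since $u_a(K)\le u(K)$ and $c_4(K)\le u(K)$, any lower bound on $u_a$ or on $c_4$ also bounds $u$. So the work is in the lower bounds, which we treat part by part.

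For part (\ref{it:plumbing}), the knots lie in $\mathcal P$, and we use Proposition~\ref{prop:classes}. The normal form of $\dbc(K)$ or of $-\dbc(K)$ is negative definite with at most two bad vertices, so the lattice formula of Ozsv\'ath--Szab\'o computes all correction terms $d(\pm\dbc(K),\mathfrak s)$. The lower bound then follows from the Montesinos trick.
\begin{itemize}
\item For $u=2$, suppose $u(K)=1$. Then $\dbc(K)=\pm S^3_{\pm D/2}(C)$ for some knot $C$, with $D=\det K$. Owens' refinement of the Ozsv\'ath--Szab\'o obstruction forces the correction terms to agree, under some identification $H_1\cong\bbz/D$, with $d$ of the lens space up to the even integers $2\cdot(\text{nonnegative terms})$ and sign constraints.
\item For $u=3$, we use Owens' rank-two obstruction. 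A half-integral surgery on a two-component link gives a negative-definite four-manifold $X$ with $b_2=2$, intersection form $Q$ and $\partial X=\pm\dbc(K)$. We require that for every spin$^c$ structure $d(\dbc(K),\mathfrak t)\ge \max (c_1(\mathfrak s)^2+2)/4$ over extensions. We enumerate the finitely many positive-definite $2\times2$ integral forms with the determinant and parity dictated by the framings, together with their possible gluings to $H_1$, and check that none is compatible with the computed $d$-invariants. This runs for both orientations.
\end{itemize}
These are finite computations once the $d$-invariants are in hand, and the main obstacle is organising the enumeration of $2\times2$ forms and affine identifications of spin$^c$ structures efficiently. The forms are bounded by the determinant, and we quotient by automorphisms of $\bbz/D$ that preserve the linking form.

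For part (\ref{it:rank2}), we bound $u_a$ classically. The linking pairing of $\dbc(K)$ needs at most $u_a(K)$ generators in a presentation by a symmetric matrix of the Montesinos type. Lickorish's obstruction, in the generalised form used by Borodzik--Friedl and others, says the following. If $u_a\le 2$, then $H_1(\dbc(K))$ is presented by a $2\times2$ symmetric matrix $A$ with $\det A=\pm D$, whose inverse represents the linking form $\lambda$, and with diagonal entries of appropriate parity. For the \nUthreeAll{} knots, the plan is as follows.
\begin{itemize}
\item If $H_1$ needs three generators, the bound is immediate; this is the rank-three case, presumably via the Nakanishi index or the Alexander module.
\item Otherwise, compute $\lambda$ from a Goeritz or Seifert matrix and rule out every $2\times2$ presentation by checking quadratic residues of the entries of $A^{-1}$ modulo the primary parts of $D$.
\end{itemize}
This gives $u_a\ge3$, and with the upper bound we get $u=u_a=3$.

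For part (\ref{it:u5}), we use the McCoy--Park theorem for special alternating knots. For such knots $c_4(K)$, and hence $u(K)$, is bounded below by the signature: $|\sigma(K)|/2\le c_4$, and McCoy--Park give sharper lattice-theoretic bounds from the Goeritz form. I would compute $\sigma$ or the McCoy--Park invariant, which uses the definite Goeritz lattice and changemaker-type embedding obstructions, for the three knots and check that it gives $5$. Combined with a five-crossing unknotting sequence, this yields $u=c_4=5$. The obstacle here is that signature alone may give only $4$, in which case the McCoy--Park characterisation must rule out $c_4=4$ by showing that the Goeritz lattice does not embed suitably.
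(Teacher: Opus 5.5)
There are genuine gaps in each of the three parts.

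\emph{Part (i).} Correction terms alone do not exclude $u=1$ for six knots of Table~\ref{tab:new-u2}: \kn{13n}{560}, \kn{13n}{572}, \kn{13n}{1133}, \kn{13n}{2066}, \kn{13n}{2296} and \kn{13n}{3163}. For these, the $d$-invariants computed from $\Gamma$ satisfy even the sharp Ni--Wu form \eqref{eq:niwu} for some identification compatible with the linking pairing. For \kn{13n}{2066}, for example, they agree with those of $L(7,2)$ under two identifications, with all $V_j=0$. So no $d$-invariant test, Owens's or any other, can finish. The missing ingredient is $HF_{\mathrm{red}}$. Because $\Gamma$ has one bad vertex, its ranks are computable from N\'emethi's graded root. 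Gainullin's mapping cone formula then requires $\operatorname{rank}HF_{\mathrm{red}}(i)-T_i=c_i+c_{i-1}$ with all $c_i\ge0$, and this has no solution for the surviving identifications.

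There is a second gap in part (i). For \kn{13n}{2933} and \kn{13n}{2956} one has $|\sigma(K)|=2$, so two unknotting changes may have opposite signs, $(p,n)=(1,1)$, or the same sign, $(0,2)$. Owens's theorem yields a definite form, and hence a $d$-invariant inequality, only when $n=\sigma(K)/2$, which is the pattern $(1,1)$. For $(0,2)$ no definite four-manifold is available, and your enumeration says nothing. The paper excludes $(0,2)$ by Traczyk's criterion, using $d_K=2$ and $\varepsilon_K=-1$ computed from $V_K(e^{i\pi/3})$. Your set-up of the definite test is also off in three ways. Half-integral surgery on two components does not give a four-manifold with $b_2=2$ and form $Q$; the integral replacement has the rank-four positive-definite form $\widetilde Q$. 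The comparison must include the parity condition. And since $H_1\cong\bbz/3\oplus\bbz/33$ is not cyclic, you must run over all isomorphisms of this group, not over automorphisms of $\bbz/D$.

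\emph{Part (ii).} Your first bullet is vacuous, because every knot of Table~\ref{tab:u3-66} has $H_1(\dbc(K))$ of rank two. Your second bullet is too weak. Borodzik--Friedl give $l(C)\cong2\,l(K)$, not $l(K)$, so your condition is missing a factor of $2$. More importantly, the linking pairing alone rules out a rank-two presentation for only $57$ of the $66$ knots. The other nine all have $|\sigma(K)|=4$, and for them matrices with the right pairing do exist. What excludes them is the coupling of the signs $e_i$ of $A(1)$ to the signature, $\operatorname{sign}C=\sigma(K)+e_1+e_2$, which forces $e_1=e_2=-\sigma(K)/4$. This must be combined with the requirement that some $GL(2,\bbz)$-congruent copy of $C$ has diagonal entries $\equiv e_1,e_2\pmod 4$ and an even off-diagonal entry (Lemma~\ref{lem:rank2}). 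Parity of the diagonal entries alone does not capture this.

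\emph{Part (iii).} The signature bound is $|\sigma(K)|/2=4$, and you do not say what lattice computation would give $5$. The usable content of McCoy--Park (Theorem~\ref{thm:mccoy-park}) is diagrammatic: $u(K)=4$ or $c_4(K)=4$ would imply that four crossing changes in the given reduced alternating $13$-crossing diagram unknot $K$. The lower bound is therefore a finite check that no set of at most four of the thirteen crossings yields the unknot. In each case the changed diagram has determinant $\ne1$ or $\widehat{HFK}$ of total rank $>1$. No separate embedding obstruction for the Goeritz lattice is needed.
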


Statement~\eqref{it:plumbing} comes from the plumbing calculation for knots of $\mathcal P$. For \nObstrD{} of the
\nNewUtwo{} knots of Table~\ref{tab:new-u2}, the correction terms of the normal form exclude $u(K)=1$ through the
formula of Ni and Wu \cite{NiWu}; for the other \nObstrHF{}, the reduced Floer homology excludes it through the mapping
cone formula \cite{Gainullin}. For the two knots of Table~\ref{tab:new-u3}, the obstruction of Owens \cite{Owens} and
the criterion of Traczyk \cite{Traczyk} exclude $u(K)=2$.

Statements~\eqref{it:rank2} and~\eqref{it:u5} complement the plumbing calculation. A form of the rank-two linking
pairing obstruction of Borodzik and Friedl \cite{BF} needs no plumbing; by Proposition~\ref{prop:classes}, neither
$\dbc(K)$ nor $-\dbc(K)$ bounds a negative-definite plumbing tree for \nThmCNoPlumb{} of the \nUthreeNonalt{}
non-alternating knots of Table~\ref{tab:u3-66}. It also applies to the \nUthreeSigmaZero{} knots of
Table~\ref{tab:u3-66} with $\sigma(K)=0$, for which the correction-term obstructions to $u=2$ used here are not
available. For special alternating knots, a theorem of McCoy and Park \cite{McCoyPark} reduces the lower bound to a
finite computation in a single diagram. Theorem~\ref{thm:main} determines \nNewAll{} new unknotting numbers. None of
them is recorded in KnotInfo (version 2026.10.5), where the unknotting numbers of all these knots are still given as
ranges \cite{KnotInfo}.

The third result concerns the conjecture of Bernhard and Jablan \cite{Bernhard,Jablan}. For a nontrivial knot $K$, let
$N_{\min}(K)$ be the set of knots obtained by changing one crossing in a minimal diagram of $K$, and let
\[
    u^s_{\mathrm{BJ}}(K)=1+\min\{u(K')\mid K'\in N_{\min}(K)\}.
\]
Then $u(K)\le u^s_{\mathrm{BJ}}(K)$ \cite{BH1705}, and the conjecture asserts that equality holds for every
knot.

\begin{theorem}\label{thm:bj}
Every crossing change in every minimal diagram of \kn{13n}{1587} yields a knot of unknotting number at least $2$.
Consequently, $u^s_{\mathrm{BJ}}(\kn{13n}{1587})=3>2=u(\kn{13n}{1587})$.
\end{theorem}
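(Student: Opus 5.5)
The plan has three stages: enumerate all minimal diagrams of \kn{13n}{1587}, show that every single-crossing-change neighbour has unknotting number at least $2$, and then combine this with $u(\kn{13n}{1587})=2$.

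\emph{Enumeration.} Since \kn{13n}{1587} is non-alternating, I cannot invoke the Tait flyping theorem to list minimal diagrams. Instead, I would generate all $13$-crossing diagrams of the knot by exhaustive search. One way is to enumerate prime $13$-crossing knot projections with all signings and keep those recognised (by SnapPy/Regina isometry or isotopy) as \kn{13n}{1587}. Another is to start from the tabulated diagram and close up under Reidemeister moves that do not increase the crossing number past a small bound, checking completeness against the projection census. This gives a finite list of minimal diagrams. For each crossing of each diagram I change the crossing and record the resulting knot $K'$. I would identify each $K'$ up to mirror image in the knot table, or certify its isotopy class, reducing to a finite set of distinct neighbour types.

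\emph{Lower bounds on the neighbours.} For each $K'$ I must show $u(K')\ge 2$, that is, $K'$ is not the unknot and $u(K')\neq 1$. Most neighbours will be excluded cheaply:
\begin{itemize}
\item if $|\sigma(K')|\ge 4$, or $|\sigma(K')|\ge 2$ paired with an incompatible determinant sign;
\item if the linking form of $\dbc(K')$ is not of the Lickorish form $\pm 2/\det$ on a cyclic group (in particular if $H_1(\dbc(K'))$ is non-cyclic);
\item if the Alexander polynomial obstructs $u_a(K')=1$ (Nakanishi index $\ge 2$).
\end{itemize}
For the remaining neighbours I would use the correction-term obstruction. When $K'$ is alternating, the Goeritz matrix computes $d(\dbc(K'))$. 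When $K'\in\mathcal P$, or more generally its cover bounds a negative-definite plumbing with at most two bad vertices, the plumbing lattice computes it, as in Proposition~\ref{prop:classes}. I then apply the Ozsv\'ath--Szab\'o/Owens criterion for $u=1$. If some neighbour already has recorded $u(K')\ge 2$ from \cite{Lee} or from Theorem~\ref{thm:main}, I simply cite it.

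\emph{Conclusion.} These bounds give $u^s_{\mathrm{BJ}}(\kn{13n}{1587})\ge 3$. For the upper bound, it suffices to exhibit one neighbour with $u(K')=2$ (a further explicit pair of crossing changes), which gives $u^s_{\mathrm{BJ}}=3$. Separately, $u(\kn{13n}{1587})=2$ requires a non-minimal diagram in which two crossing changes unknot the knot, found by a search over diagrams with $14$--$16$ crossings. The matching lower bound is $u\ge 2$, which should follow from a signature or correction-term obstruction to $u=1$.

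The main obstacle is twofold. First, one must justify that the list of minimal diagrams is complete, which rests on the completeness of the projection enumeration. Second, a few neighbours $K'$ may be non-alternating with covers in $\mathcal N$, where no lattice formula is available. For those I would first try the Borodzik--Friedl linking-pairing obstruction and the Alexander-module tests. Failing that, I would compute $\widehat{HF}$ of $\dbc(K')$ with the mapping cone or a triangulation-based program, and compare it against the Ni--Wu constraints for $u=1$.
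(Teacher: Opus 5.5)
Your skeleton matches the paper's: prove $u(K)=2$, determine $N_{\min}(K)$ by enumerating diagrams, bound the neighbours from below, and use a neighbour with $u=2$ for $u^s_{\mathrm{BJ}}\le3$. The real difference is the enumeration.

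\textbf{The enumeration.} You set the flyping theorem aside because \kn{13n}{1587} is non-alternating, but the paper uses it anyway, through \cite[Prop.~2.3]{BH1705}. The argument runs as follows.
\begin{itemize}
\item A minimal diagram of a prime knot is reduced and prime. Giving its shadow alternating crossing data therefore produces a reduced prime alternating diagram of a $13$-crossing alternating knot.
\item That alternating diagram is related by flypes to the fixed reference diagram of its knot.
\item A flype is an isotopy whatever the crossing data, and it commutes with crossing changes.
\item Hence every knot of $N_{\min}(K)$ arises by changing one crossing in some diagram of $K$ obtained from one of the \nRefAlt{} reference diagrams by crossing changes. One crossing is held fixed to handle mirrors.
\end{itemize}
Filtering these diagrams by determinant, simplification, Alexander polynomial and homology of small covers leaves \nMinDiag{} diagrams.

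\textbf{Comparison.} Your census of all prime $13$-crossing shadows with all signings is also valid. It even yields every minimal diagram of $K$, which the paper does not claim. But it is a larger search, and its completeness rests on the shadow census you flag as the obstacle. The paper's completeness rests only on the flyping theorem and the table of alternating knots. Your Reidemeister-closure variant cannot certify completeness on its own. In either version, the filter must only discard a diagram when an invariant proves it is not $K$; a failed isometry search proves nothing, whereas keeping a spurious diagram is harmless.

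\textbf{Two smaller points.} First, the eight neighbours that actually occur have at most twelve crossings. They fall to $|\sigma|=4$, to non-cyclic $H_1(\dbc)$, to \cite{GordonLuecke06}, to \cite{Lee}, and, for the alternating $10_{38}$, to McCoy's theorem \cite{McCoy}. That theorem reduces $u=1$ to a finite check of one reduced alternating diagram, so it is decisive where a Goeritz correction-term test need not be. Second, $9_{46}$ (with $\sigma=0$) and neighbours with $|\sigma|=4$ both occur, so $|\sigma(\kn{13n}{1587})|=2$. The bound $u\ge2$ therefore cannot come from the signature; it comes from the correction terms of the $L$-space $\dbc(K)$ computed with Greene's model \cite{Greene,Lee}.
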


For \kn{13n}{1587}, Brittenham and Hermiller showed that the unknotting number is smaller than the weak
Bernhard--Jablan number, in which every crossing change of an unknotting sequence is made in a minimal diagram of the
current knot, but the equality $u=u^s_{\mathrm{BJ}}$ remained open \cite{BH1705}; the value $u(\kn{13n}{1587})=2$ was
determined in \cite{Lee}. Theorem~\ref{thm:bj} settles this case. Its new assertion is the lower bound for every knot
obtained by a crossing change in a minimal diagram. We prove it by finding, with the reference diagrams of Brittenham
and Hermiller \cite{BH1705}, \nMinDiag{} minimal diagrams from which all these knots arise, and bounding the unknotting
numbers of the resulting knots. Hence \kn{13n}{1587} is a further explicit counterexample to the
Bernhard--Jablan conjecture, besides \kn{13n}{3370} \cite{BH1705,Lee}.

The computations separate the search for presentations from the verification of the conclusions. Every plumbing tree
obtained from Regina's data is identified with $\pm\dbc(K)$ by triangulating its plumbed manifold, and the normal forms
read from algebraic diagrams are confirmed by a second, different diagram of each knot. The inertia of every intersection matrix
is computed exactly, and the orientations of the normal forms used for Theorem~\ref{thm:main}\eqref{it:plumbing} are
confirmed by a Casson--Walker identity. The correction-term obstructions
are recomputed by separately written programs, whose shared inputs are specified in Section~\ref{sec:verify}.

The paper is organised as follows. Section~\ref{sec:background} collects background on double branched covers,
unknotting obstructions and plumbings, and describes the computation of the normal form. Section~\ref{sec:new} proves
Proposition~\ref{prop:classes} and Theorem~\ref{thm:main}\eqref{it:plumbing}, Section~\ref{sec:66} proves
Theorem~\ref{thm:main}\eqref{it:rank2} and~\eqref{it:u5}, and Section~\ref{sec:13n1587} proves Theorem~\ref{thm:bj}.
The new values are listed in the appendix.

\section{Background}\label{sec:background}

We write $\det K=|\Delta_K(-1)|$ for the determinant of a knot $K$, $\sigma(K)$ for its signature, and $\dbc(K)$ for
the double cover of $S^3$ branched along $K$. We follow the convention of \cite{Owens}: changing a positive crossing
leaves the signature unchanged or raises it by two, and changing a negative crossing leaves it unchanged or lowers it
by two. The double branched cover $\dbc(K)$ is a rational homology sphere with $|H_1(\dbc(K))|=\det K$. Since $\det K$ is odd,
$\dbc(K)$ has a unique spin structure, which we use to identify $\mathrm{Spin}^c(\dbc(K))$ with $H_1(\dbc(K))$. For an
integral symmetric $r\times r$ matrix $A$ of odd determinant, let $l(A)$ be the linking pairing on $\bbz^r/A\bbz^r$
with values in $\bbq/\bbz$ given by $([v],[w])\mapsto v^TA^{-1}w\bmod\bbz$. If $V$ is a Seifert matrix of $K$, then $\sigma(K)=\operatorname{sign}(V+V^T)$, and the linking
pairing $l(K)$ of $\dbc(K)$ is isometric to $l(V+V^T)$ \cite{BF}.

\subsection{Double branched covers and unknotting}\label{sec:bg-dbc}

By the Montesinos trick, crossing changes on $K$ give surgery descriptions of $\dbc(K)$ \cite{Montesinos}. In
particular, if $u(K)=1$, then $\dbc(K)\cong S^3_{\pm D/2}(C)$ for a knot $C\subset S^3$, where $D=\det K$
\cite{OS05}. For two or more crossing changes we use the following refinement, which records their signs.

\begin{theorem}[Owens \cite{Owens}]\label{thm:montesinos}
Suppose that $K$ can be unknotted by changing $p$ positive and $n$ negative crossings, where $n=\sigma(K)/2$. Then
$\dbc(K)$ is obtained by surgery on a $(p+n)$-component link in $S^3$ with linking matrix $\frac12Q$, where $Q$ is a
positive-definite integral matrix with $Q\equiv I\pmod2$ and $\det Q=\det K$, exactly $n$ of whose diagonal entries
are congruent to $3$ modulo $4$.
\end{theorem}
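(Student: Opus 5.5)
The plan is to follow Owens's strategy: start from an unknotting sequence, apply the Montesinos trick to the crossings one at a time, and then build the matrix $Q$ from a 4-manifold. Suppose $K$ becomes the unknot after changing crossings $c_1,\dots,c_{p+n}$, where $p$ are positive and $n$ are negative. At each crossing change we pick a small ball meeting $K$ in a rational tangle, and changing the crossing replaces that tangle by another one. In the double branched cover the ball lifts to a solid torus. Replacing the tangle corresponds to a half-integral Dehn surgery on the core of that solid torus. Performing all $p+n$ changes simultaneously turns $K$ into the unknot $U$, whose cover is $S^3$. Reversing the process, $\dbc(K)$ is obtained from $\dbc(U)=S^3$ by surgery on a $(p+n)$-component link $L$ with half-integral framings. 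Writing the linking matrix as $\frac12 Q$, the matrix $Q$ is integral with odd diagonal entries. Its off-diagonal entries are twice linking numbers, so they are even, and therefore $Q\equiv I\pmod 2$. Since $|H_1(\dbc(K))|=\det K$ and the order of $H_1$ of the surgered manifold is $|\det(\tfrac12 Q)|$ after clearing denominators, we get $|\det Q|=\det K$.

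The main step is to show that $Q$ can be taken positive definite, with the prescribed number of diagonal entries congruent to $3$ modulo $4$. I would convert each half-integral surgery $\frac{2m+1}{2}$ into integral surgery on a two-component chain, using a slam-dunk or Rolfsen twist. This gives a 4-manifold $X$ with $\partial X=\dbc(K)$ and even intersection form. Following Owens, I would then relate the signature of $X$, via the signature formula for double branched covers (Gordon--Litherland), to $\sigma(K)$ together with the signs of the crossings.

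Each crossing change of sign $\epsilon$ changes the signature by $0$ or $-2\epsilon$ in the normalised convention stated above. Going from $U$ (with $\sigma=0$) back to $K$ therefore gives $\sigma(K)\le 2n$ when exactly $n$ crossings are negative. In this statement the hypothesis $n=\sigma(K)/2$ means equality holds, so every negative change moves the signature by exactly $2$ and every positive change moves it by $0$. Each diagonal entry $2m_i+1$ of $Q$ records the sign of the $i$-th change. Choosing the framing convention so that positive crossings give entries $\equiv1\pmod4$ and negative crossings give entries $\equiv3\pmod4$, exactly $n$ entries are $\equiv3\pmod4$. The signature bookkeeping (Owens, via Cochran--Lickorish) then shows that the extremal case forces the associated form to have full positive index, that is, $Q$ is positive definite.

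I expect the main obstacle to be making this signature computation precise. One has to track the sign and parity of the framing under the Montesinos trick, and verify that the equality $\sigma(K)=2n$ forces definiteness rather than merely a bound on the index. Since the result is quoted from \cite{Owens}, in the paper I would cite that argument for this step rather than reproduce it.
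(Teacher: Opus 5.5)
The first paragraph of your proposal is correct. The Montesinos trick realises $\dbc(K)$ as surgery on a link $\gamma_1\cup\dots\cup\gamma_{p+n}\subset\dbc(U)=S^3$ with coefficients $Q_{ii}/2$. The matrix presenting $H_1$ is exactly $Q$, so $Q\equiv I\pmod 2$ and $|\det Q|=\det K$. The paper itself gives no proof here; it only quotes Owens.

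\textbf{The mod-$4$ claim is not a convention.} Neither substantive conclusion is actually proved, and this is the first of them. Once orientations and sign conventions are fixed, each $Q_{ii}/2$ is the surgery slope on $\gamma_i$ measured against its Seifert framing in $S^3$. It is determined by the geometry, so the claim that $Q_{ii}\equiv1$ for every positive crossing and $Q_{ii}\equiv3$ for every negative one needs proof. Surgery on $\gamma_i$ alone gives $\dbc(U_i)$, where $U_i$ is the unknotted diagram with only the $i$-th crossing restored, with the same sign as in $K$. So what you need is: \emph{if changing one crossing of sign $\epsilon$ unknots $K'$, then $\dbc(K')\cong S^3_{r/2}(C)$ with $r\equiv\epsilon\pmod 4$.} This is exactly the one-component case of the theorem, applied to $K'$ or its mirror, so it carries the real content. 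It does not follow from the algebra. The linking pairing determines $r$ only up to squares modulo $\det K'$, so it cannot see the sign of $r$ when $-1$ is a square, e.g.\ $\det K'=5$. The crossing sign must enter geometrically, through a local model of the trick with the framing tracked or a Gordon--Litherland-type signature computation for a branched cover. That is what the citation to Owens supplies, and your sketch does not.

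\textbf{Definiteness is asserted, not derived.} Knowing that each change is signature-extremal says nothing about $Q$ until it is connected to $Q$. The sentence ``the signature bookkeeping then shows \ldots'' is where that connection belongs. Given the one-crossing statement, the argument is short:
\begin{itemize}
\item Let $Q_j$ be the leading $j\times j$ block and $K_j$ the knot with only the first $j$ crossings restored, so $|\det Q_j|=\det K_j$.
\item The off-diagonal entries are even, so $\det Q_j\equiv\prod_{i\le j}Q_{ii}\equiv(-1)^{n_j}\pmod 4$, where $n_j$ counts the negative crossings among the first $j$.
\item Also $\det K_j\equiv(-1)^{\sigma(K_j)/2}\pmod 4$.
\item Your inequality, applied to $K_j$ and to the passage from $K$ to $K_j$, shows that $\sigma(K)=2n$ forces $\sigma(K_j)=2n_j$.
\item Hence $\det Q_j\equiv\det K_j\pmod 4$. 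Since $\det K_j$ is odd, this gives $\det Q_j=\det K_j>0$ for every $j$.
\item Sylvester's criterion then gives positive definiteness, and $\det Q=\det K$.
\end{itemize}
Without the extremality hypothesis, counting sign changes of the leading minors gives $\operatorname{sign}Q=\sigma(K)+p-n$.

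A minor point: the slam-dunk form $\left(\begin{smallmatrix}M&I\\I&2I\end{smallmatrix}\right)$ is not even in general, since $M_{ii}=m_i$ can be odd. Evenness plays no role in the argument.
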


We use $\mathbb F_2$ coefficients. For a rational homology sphere $Y$ and $\mathfrak t\in\mathrm{Spin}^c(Y)$, the
correction term $d(Y,\mathfrak t)\in\bbq$ is the minimal grading of a nonzero homogeneous element in the image of the
map $HF^\infty(Y,\mathfrak t)\to HF^+(Y,\mathfrak t)$ \cite{OS03}, and the reduced group
$HF_{\mathrm{red}}(Y,\mathfrak t)$ is the cokernel of this map. Reversing the orientation changes the sign of every
correction term \cite{OS03} and, by duality, does not change the dimensions of the reduced groups.

If $u(K)=1$, then $H_1(\dbc(K))$ is cyclic, generated by a class of self-linking $\pm2/D$ \cite{Lickorish,BF}, and after reversing the orientation if necessary $\dbc(K)\cong S^3_{D/2}(C)$. In the labelling of $\mathrm{Spin}^c(S^3_{D/2}(C))$ by $\bbz/D$
used by Ni and Wu, their formula \cite{NiWu} gives
\begin{equation}\label{eq:niwu}
    d(S^3_{D/2}(C),i)=d(L(D,2),i)-2\max\{V_{\lfloor i/2\rfloor},V_{\lfloor(D+1-i)/2\rfloor}\},\qquad 0\le i<D,
\end{equation}
where the integers $V_j\ge0$ depend only on $C$, satisfy $V_j\ge V_{j+1}\ge V_j-1$ \cite{NiWu2}, and vanish
for $j$ at least the genus of $C$ \cite{NiWu}. Thus, if $u(K)=1$, the correction terms of $\dbc(K)$ or of
$-\dbc(K)$ have the form \eqref{eq:niwu} under an affine identification of $\mathrm{Spin}^c$ structures with $\bbz/D$
whose linear part is compatible with the linking pairing; compare \cite{OwensStrle}.

The mapping cone formula gives a further condition. For $0\le i<D$ and $n\in\bbz$ put $s_n=\lfloor(i+nD)/2\rfloor$,
and let $n_i\in\{0,-1\}$ be an index with $|s_{n_i}|=\min\{\lfloor i/2\rfloor,\lfloor(D+1-i)/2\rfloor\}$, the index
of the maximum in \eqref{eq:niwu}. By \cite[Cor.~14 and Prop.~15]{Gainullin}, together with $H_s=V_{-s}$,
$HF_{\mathrm{red}}(S^3_{D/2}(C),i)$ is the direct sum of a part of dimension
\[
    T_i=\sum_{n\ne n_i}V_{|s_n|},
\]
which comes from the towers of the mapping cone, and of $\bigoplus_{n\in\bbz}Q_{s_n}$ for finite-dimensional vector
spaces $Q_s$ that depend only on $C$. Each $Q_s$ occurs for exactly two labels, $2s$ and $2s+1$ modulo $D$, so
\begin{equation}\label{eq:cone}
    \operatorname{rank}HF_{\mathrm{red}}(S^3_{D/2}(C),i)-T_i=c_i+c_{i-1},\qquad c_i\in\bbz_{\ge0},\quad i\in\bbz/D,
\end{equation}
with $c_i$ the sum of $\dim Q_s$ over $2s\equiv i\pmod D$. Since $D$ is odd, \eqref{eq:cone} has the unique solution
$c_i=\frac12\sum_{j=0}^{D-1}(-1)^jr_{i-j}$, where $r_i$ denotes its left-hand side, and the condition is that these
numbers are non-negative integers. The correction terms determine $V_j$ for
$j\le\max_i\min\{\lfloor i/2\rfloor,\lfloor(D+1-i)/2\rfloor\}$; we apply \eqref{eq:cone} only when the last of these
$V_j$ is zero, so that all $V_j$, and hence all $T_i$, are known. Lidman used this argument to show that
$u(\kn{11n}{102})=2$ \cite{Lidman}.

Suppose that $u(K)=2$, and replace $K$ by its mirror if necessary so that $\sigma(K)\ge0$. Since the signature changes
by at most two at each crossing change and a positive change cannot lower it, the two changes are negative if
$\sigma(K)=4$, and if $\sigma(K)=2$ they are either one positive and one negative, $(p,n)=(1,1)$, or two negative,
$(p,n)=(0,2)$. In the cases $n=\sigma(K)/2$, Theorem~\ref{thm:montesinos} provides a matrix
\[
    Q=\begin{pmatrix}2m_1-1&2a\\2a&2m_2-1\end{pmatrix},\qquad \det Q=\det K,
\]
with exactly $n$ of $m_1,m_2$ even. Replacing the half-integral surgery by an integral one gives a positive-definite
four-manifold bounded by $\dbc(K)$ whose intersection form is, after reordering the basis,
$\widetilde Q=\left(\begin{smallmatrix}M&I\\ I&2I\end{smallmatrix}\right)$ with $2M-I=Q$ \cite{Owens}. For a
positive-definite integral matrix $A$ of rank $r$ and odd determinant, let
$m_A(g)=\min\{(\xi^TA^{-1}\xi-r)/4\}$ over the characteristic covectors $\xi$ ($\xi_i\equiv A_{ii}\pmod 2$) in the
class $g\in\bbz^r/A\bbz^r$. Ozsv\'ath and Szab\'o's inequality, in the form of \cite{Owens}, then
gives an isomorphism $\phi$ from $\bbz^4/\widetilde Q\bbz^4$ to $\mathrm{Spin}^c(\dbc(K))$ with
$m_{\widetilde Q}(g)\ge d(\dbc(K),\phi(g))$ and $m_{\widetilde Q}(g)\equiv d(\dbc(K),\phi(g))\pmod 2$ for every $g$. If
no matrix $Q$ admits such an isomorphism, the pattern is excluded. The remaining pattern $(0,2)$ for $\sigma(K)=2$ is
controlled by the Jones polynomial. By Lickorish and Millett, $V_K(e^{i\pi/3})=\varepsilon_K(i\sqrt3)^{d_K}$ with
$\varepsilon_K=\pm1$, where $d_K$ is the nullity of $V+V^T$ modulo $3$, that is $d_K=\dim H_1(\dbc(K);\mathbb F_3)$
\cite{LM}. Traczyk's criterion relates $\varepsilon_K$ to the signs of $d_K$ crossing changes that unknot $K$
\cite{Traczyk} (see also \cite{ShewellBrockway}); for $d_K=2$ it says that two changes of the same
sign force $\varepsilon_K=1$. Hence, if $d_K=2$ and $\varepsilon_K=-1$, the pattern $(0,2)$ is impossible. If
$\sigma(K)=0$, the pattern $(1,1)$ is not covered by these arguments.

The algebraic unknotting number $u_a(K)$ is the least number of crossing changes that turn $K$ into a knot with
Alexander polynomial one; clearly $u_a(K)\le u(K)$. Borodzik and Friedl bound $u_a(K)$ from below in terms of the
Blanchfield pairing $\lambda(K)$.

\begin{theorem}[Borodzik--Friedl \cite{BF}]\label{thm:bf}
If $K$ can be turned into a knot with Alexander polynomial one by $k$ crossing changes, then $\lambda(K)$ is presented
by a hermitian $k\times k$ matrix $A(t)$ over $\bbz[t^{\pm1}]$ such that $A(1)$ is diagonal with entries $\pm1$,
whose signs are determined by the signs of the changed crossings. The integral symmetric matrix
$C=A(-1)$ satisfies $|\det C|=\det K$, $\operatorname{sign}C=\sigma(K)+\operatorname{sign}A(1)$ and
$l(C)\cong2\,l(K)$.
\end{theorem}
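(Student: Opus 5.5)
The plan is to prove Theorem~\ref{thm:bf} by translating the $k$ crossing changes into a $k$-component surgery description, in the spirit of the Montesinos trick, but carried out on the level of Seifert surfaces and Blanchfield pairings instead of branched covers.

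Let $K'$ be the knot with $\Delta_{K'}=1$ obtained after the $k$ changes. Each change is realised by a $\mp1$ surgery on a small unknot $c_i$ that encircles the crossing and has linking number zero with $K$. So $K$ lies in the manifold $Y$ obtained from $S^3$ by surgery on the link $c_1\cup\dots\cup c_k$ with framings $\epsilon_i=\pm1$, and $Y\cong S^3$. Under this identification $K$ becomes $K'$; equivalently, $K'$ is obtained from $K$ by these surgeries, and reversing the roles gives the same picture. The next step is to compute the Alexander module of $K$ from that of $K'$. Because $\Delta_{K'}=1$, the infinite cyclic cover of the exterior of $K'$ has trivial rational Alexander module. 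In fact its $\bbz[t^{\pm1}]$-module $H_1$ vanishes, since the module is presented by $tV'-V'^T$ and $\det(tV'-V'^T)=\Delta_{K'}$ is a unit.

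The exterior $X_K$ is obtained from $X_{K'}$ by surgery on $k$ null-homologous curves, namely the duals of the $c_i$. Lifting these curves to the infinite cyclic cover, a Mayer--Vietoris argument presents $H_1(\widetilde X_K)$ by the $k\times k$ equivariant linking matrix $A(t)$ of the lifts, with the framings $\epsilon_i$ on the diagonal. This matrix is hermitian because equivariant linking satisfies $\mathrm{lk}(x,ty)=\overline{\mathrm{lk}(tx,y)}$. Its entries are in $\bbz[t^{\pm1}]$, rather than in a localisation, because $H_1(\widetilde X_{K'})=0$, so the lifts bound equivariant chains. Setting $t=1$ recovers the ordinary linking matrix of the $c_i$, which is $\operatorname{diag}(\epsilon_i)$ after the usual adjustment making the $c_i$ pairwise unlinked. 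This gives $A(1)$ diagonal with signs determined by the crossings. The Blanchfield pairing is then given by $A(t)^{-1}$, by the standard duality computation relating linking in the cover to the inverse of the presentation matrix. I expect this identification, including the signs, to be the main obstacle, and I would follow Borodzik--Friedl's argument there.

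The remaining claims are formal. Since $A(t)$ presents the Alexander module, $\det A(t)\doteq\Delta_K(t)$, so $|\det C|=|\Delta_K(-1)|=\det K$. Evaluating at $t=-1$, the matrix $C$ presents $H_1(\dbc(K))$, and the Blanchfield pairing specialises to twice the linking pairing, which gives $l(C)\cong2\,l(K)$. The factor $2$ comes from the factor $(1-t)$ at $t=-1$ in the transfer. For the signature, consider the path $A(e^{i\theta})$ from $\theta=0$ to $\theta=\pi$. The signature of $A(e^{i\theta})$ changes only at roots of $\Delta_K$, and the jumps agree with those of the Tristram--Levine signature function. That function is $0$ at $\theta=0$ and equals $\sigma(K)$ at $\theta=\pi$. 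Hence $\operatorname{sign}C-\operatorname{sign}A(1)=\sigma(K)$. This step requires both hermitian matrices to present the same Blanchfield form, so that their signature jumps coincide; that follows from the Witt-class argument for forms presenting isometric linking forms.
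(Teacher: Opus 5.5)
The paper does not prove this statement. It is quoted from \cite{BF}, so the relevant comparison is with Borodzik and Friedl's argument, and that argument is four-dimensional. The $\pm1$-framed two-handles realising the crossing changes, capped off by a manifold with fundamental group $\bbz$ bounded by the zero-surgery on the Alexander-polynomial-one knot, give a four-manifold bounded by the zero-surgery on $K$. Freedman's theorem is what supplies that cap. Its $\bbz[t^{\pm1}]$-intersection form is $A(t)$, and its ordinary intersection form is $A(1)$.

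Your route is three-dimensional: you present the Alexander module of $K$ by the equivariant linking matrix of the lifts of the dual curves in $\widetilde X_{K'}$. It is sound. About $K'$ it needs only $H_1(\widetilde X_{K'})=0$, plus $H_2(\widetilde X_{K'})=0$, which holds for every knot and which your Mayer--Vietoris step also uses. It needs no topological four-manifolds. The price is that the presentation of $\lambda(K)$ and the signature identity must be proved by hand rather than read off an intersection form.

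The step you leave open, $\lambda(K)\cong A(t)^{-1}$, cannot be taken from \cite{BF}, because their argument lives in the four-manifold. In your setting it is the equivariant version of the classical fact that integral surgery has linking form given by the inverse linking matrix. Set up the chains as follows.
\begin{itemize}
\item Let $f_i$ be the framing curve of $\tilde c_i$ on the boundary of its tubular neighbourhood, and let $\nu$ be the union of the neighbourhoods of all lifts.
\item Let $F_i$ be a compact $2$-chain in $\widetilde X_{K'}$ with $\partial F_i=f_i$, and let $D_i$ be the meridian disc of the solid torus glued along $f_i$.
\item Then $G_i=D_i-(F_i\setminus\nu)$ is a $2$-chain in $\widetilde X_K$ whose boundary is $\sum_jA_{ij}(t)\mu_j$ up to sign. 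Hence $\det A(t)\,\mu_i=\partial\bigl(\sum_k\operatorname{adj}(A)_{ik}G_k\bigr)$.
\item Pushed into the glued solid torus, $\mu_j$ is isotopic to the core, which meets only $D_j$, once.
\end{itemize}
So $\lambda(\mu_i,\mu_j)=\pm(A^{-1})_{ij}$, up to your conjugation convention. You need not fix this sign. Take whichever of $\pm A$ presents $\lambda(K)$ in the convention where a Seifert presentation has signature function $\sigma_z(K)$. That matrix satisfies every claim, and the signs of its diagonal at $t=1$ are still determined by the crossings.

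Three smaller corrections:
\begin{itemize}
\item $A(1)$ records the framings of the dual curves, and these are $-\epsilon_i$, since undoing a full twist needs the opposite twist.
\item Your ``usual adjustment'' is the fact that crossing changes made one after another in different diagrams can be realised simultaneously by disjoint crossing discs for $K$. This is what makes the $c_i$ an unlink and $A(1)$ diagonal, so it should be stated.
\item In the signature step, the Seifert presentation $(1-t^{-1})V+(1-t)V^T$ presents $\lambda(K)$ only over $\bbz[t^{\pm1},(1-t)^{-1}]$, because its determinant carries an extra power of $1-t$. Run the Witt argument over that ring. Compare the two signature functions near $z=1$, where $\sigma_z(K)=0$ and $\operatorname{sign}A(z)=\operatorname{sign}A(1)$, rather than at $z=1$, where the Seifert form vanishes.
\end{itemize}
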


Since $A(1)$ is diagonal, the diagonal entries of $C$ are congruent modulo four to those of $A(1)$, and its
off-diagonal entries are even \cite{BF}. We use these conditions in Section~\ref{sec:66}.

A knot is \emph{special alternating} if it has an alternating diagram in which one of the checkerboard surfaces is a
Seifert surface; all crossings of such a diagram have the same sign. Let $c_4$ denote the four-dimensional clasp
number, the least number of double points of a normally immersed disc in $B^4$ bounded by the knot
\cite{OwensStrle}. McCoy and Park proved the following for non-split special alternating links; we state the case of
knots.

\begin{theorem}[McCoy--Park \cite{McCoyPark}]\label{thm:mccoy-park}
Let $K$ be a special alternating knot. The following are equivalent: \textup{(i)} $u(K)=|\sigma(K)|/2$;
\textup{(ii)} $c_4(K)=|\sigma(K)|/2$; \textup{(iii)} $K$ can be unknotted by $|\sigma(K)|/2$ crossing changes in any
alternating or minimal diagram.
\end{theorem}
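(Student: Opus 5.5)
This statement is a theorem of McCoy and Park, quoted here as background. I would not reprove it in full; the plan below recalls how the equivalence is organised, which is also how it is used in Section~\ref{sec:66}.

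\emph{The easy implications.} Take a special alternating diagram of $K$ and replace $K$ by its mirror if necessary, so that all crossings are negative and $\sigma(K)\ge0$.
\begin{itemize}
\item (iii)$\Rightarrow$(i): the diagram realises $|\sigma(K)|/2$ changes, so $u(K)\le|\sigma(K)|/2$.
\item (i)$\Rightarrow$(ii): an unknotting sequence gives an immersed disc with one double point per change, so $c_4(K)\le u(K)$.
\item Lower bound: the classical inequality $|\sigma(K)|/2\le c_4(K)$ holds because each double point changes the signature by at most two. This forces equality in both (i) and (ii).
\end{itemize}

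\emph{The substantive direction, (ii)$\Rightarrow$(iii).} Suppose $K$ bounds an immersed disc with $k=\sigma(K)/2$ double points, all necessarily of the same sign by the signature count. Blowing up at the double points and taking the double branched cover gives a smooth four-manifold $X$ bounded by $\dbc(K)$. After orientation, $X$ is definite with intersection form of rank $k$, subject to parity conditions like those in Theorem~\ref{thm:montesinos}.

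Since $K$ is alternating, $\dbc(K)$ also bounds the Goeritz lattice $\Lambda$ of the special alternating diagram, which is definite of opposite sign. Gluing the two pieces gives a closed definite manifold. Donaldson's theorem then embeds $\Lambda\oplus\langle\text{rank }k\rangle$ in a diagonal lattice, and the Heegaard Floer correction terms sharpen this to a \emph{tight} embedding.

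Because one checkerboard surface of a special alternating diagram is a Seifert surface, $\Lambda$ is a graph lattice whose vertices are the regions of one colour. A combinatorial analysis of this embedding, in the style of Greene's changemaker arguments, should identify $k$ crossings of the diagram whose simultaneous change yields the unknot. These crossings correspond to vectors of the complementary rank-$k$ lattice.

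\emph{Independence of the diagram.} The diagram in (iii) is arbitrary, so the result must hold for every alternating diagram, not just the one analysed. By the flyping theorem, all reduced alternating diagrams of $K$ are related by flypes, and minimal diagrams of alternating knots are exactly the reduced alternating ones. So it suffices to check that the set of crossings found above is carried along by flypes, which preserve the Goeritz lattice up to isomorphism.

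\emph{Main obstacle.} The hardest step is extracting actual crossings from the lattice embedding. The argument must show that the tight embedding forces a standard form that matches a set of crossings in the diagram.
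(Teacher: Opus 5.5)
The paper gives no proof of this statement. It quotes the theorem from McCoy and Park, and Section~\ref{sec:u5} relies on exactly the implications (i)$\Rightarrow$(iii) and (ii)$\Rightarrow$(iii). Your easy implications are correct, via $|\sigma(K)|/2\le g_4(K)\le c_4(K)\le u(K)$. As an argument for (ii)$\Rightarrow$(iii), however, your sketch has a genuine gap. The step that produces $k$ crossings is only asserted (``should identify''), and it is the whole content of the theorem. The setup meant to feed that step is also miscounted. The double branched cover of $B^4$ blown up $k$ times, along the proper transform of the immersed disc, has Euler characteristic $2(1+k)-1=2k+1$. So its intersection lattice has rank $2k$, not $k$; compare the $2r\times2r$ matrix $\widetilde Q$ after Theorem~\ref{thm:montesinos}.

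The correct count is where the special alternating hypothesis does its work. The reason you give is vacuous, since the Goeritz lattice of any alternating diagram is a graph lattice on the regions of one colour.
\begin{itemize}
\item The Seifert checkerboard surface gives $V+V^T$, whose signature $\sigma(K)$ has the same sign as your rank-$2k$ lattice. So the lattice of opposite sign comes from the other checkerboard surface, and it has rank $s-1$, where $s$ is the number of Seifert circles.
\item The closed definite manifold then has rank $2k+s-1$. This equals the number $c$ of crossings exactly because $2k=|\sigma(K)|=2g(K)=c-s+1$.
\item Hence the rank-$2k$ lattice has full rank in the orthogonal complement in $\bbz^c$. For the standard embedding, that complement is the flow lattice of the Seifert graph.
\item The lifts of the exceptional spheres are $k$ disjoint spheres of square $\pm2$. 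A flow of norm two is a $2$-cycle, that is, two crossings joining the same two Seifert circles. Plausibly this is the structure from which the crossings are read off.
\end{itemize}
Before any of this, you would have to show that the embedding is standard, and Donaldson's theorem alone does not give that. For $T(2,5)$, the lattice $\langle5\rangle$ embeds in $\bbz^5$ both as $(1,1,1,1,1)$ and as $(2,1,0,0,0)$. Excluding such embeddings is presumably where correction terms would enter, through the sharpness of the Goeritz manifold of an alternating diagram.

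Finally, your reduction to flypes needs primeness: the closure of $\sigma_1^3\sigma_2^3$ is a minimal, non-alternating diagram of the granny knot. This is harmless, since changing a fixed set of crossings commutes with flypes and with turning over a connected summand.
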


\subsection{Plumbings and the Neumann normal form}\label{sec:bg-plumbing}

A \emph{plumbing graph} is a finite tree $\Gamma$ with vertex set $V$ and an integer weight $e_v$ at each vertex $v$.
To $\Gamma$ we associate a framed link in $S^3$: each vertex $v$ gives an unknot with framing $e_v$, and the unknots
of two vertices form a Hopf link, as in Figure~\ref{fig:plumbing}(a), exactly when the vertices are joined by an
edge. Surgery on this framed link gives a closed oriented $3$-manifold $Y_\Gamma$, the \emph{plumbed manifold} of
$\Gamma$. Equivalently, $Y_\Gamma$ is the boundary of the $4$-manifold $X_\Gamma$ obtained by plumbing disc bundles
over $S^2$ with Euler numbers $e_v$ according to $\Gamma$.

\begin{figure}[htbp]
\centering
\begin{tikzpicture}[vx/.style={circle,fill,inner sep=1.5pt}, lab/.style={font=\footnotesize}]
\node[vx,label={[lab]above:$e_v$}] (v) at (0,0) {};
\node[vx,label={[lab]above:$e_w$}] (w) at (1.1,0) {};
\draw (v)--(w);
\node at (1.85,0) {$\longleftrightarrow$};
\draw[thick] (3.2,0) ellipse (0.75 and 0.45);
\draw[thick] (4.3,0) ellipse (0.75 and 0.45);
\begin{scope}
  \clip (3.75,0.306) circle (0.14);
  \draw[white,line width=3.6pt] (4.3,0) ellipse (0.75 and 0.45);
  \draw[thick] (3.2,0) ellipse (0.75 and 0.45);
\end{scope}
\begin{scope}
  \clip (3.75,-0.306) circle (0.14);
  \draw[white,line width=3.6pt] (3.2,0) ellipse (0.75 and 0.45);
  \draw[thick] (4.3,0) ellipse (0.75 and 0.45);
\end{scope}
\node[lab] at (2.3,0.55) {$e_v$};
\node[lab] at (5.2,0.55) {$e_w$};
\node at (2.6,-1.65) {(a)};
\begin{scope}[xshift=7.6cm]
  \node[vx,label={[lab]above:$-2$}] (A) at (0,0) {};
  \node[vx,label={[lab]above:$-2$}] (C) at (1.15,0) {};
  \node[vx,label={[lab]above:$-2$}] (B) at (2.3,0) {};
  \node[vx,label={[lab]left:$-2$}] (a1) at (-0.85,0.6) {};
  \node[vx,label={[lab]left:$-3$}] (a2) at (-0.85,-0.6) {};
  \node[vx,label={[lab]right:$-3$}] (b1) at (3.15,0.6) {};
  \node[vx,label={[lab]below:$-2$}] (b2) at (3.15,-0.6) {};
  \node[vx,label={[lab]below:$-2$}] (b3) at (4.0,-0.6) {};
  \draw (a1)--(A)--(a2) (A)--(C)--(B) (b1)--(B)--(b2)--(b3);
  \draw[dashed,rounded corners=4pt] (-1.85,-1.2) rectangle (0.35,1.0);
  \draw[dashed,rounded corners=4pt] (1.95,-1.2) rectangle (4.5,1.0);
  \node[lab] at (1.15,-0.35) {chain};
  \node at (1.3,-1.65) {(b)};
\end{scope}
\end{tikzpicture}
\caption{(a) An edge of a plumbing graph and the corresponding framed unknots, which form a Hopf link. (b) The normal
form of $-\dbc(\kn{13n}{611})$ (Table~\ref{tab:new-u2}). Each dashed box contains a node and its legs and corresponds
to a Seifert piece; the regular fibres of the two pieces meet twice on the JSJ torus, which gives the chain $[2]$, a
single vertex of weight $-2$. The tree is negative definite with $\det Q_\Gamma=9=\det\kn{13n}{611}$, and its two
nodes are its bad vertices.}
\label{fig:plumbing}
\end{figure}

The \emph{intersection matrix} $Q_\Gamma$ of $\Gamma$ is the symmetric $V\times V$ matrix with $(Q_\Gamma)_{vv}=e_v$,
$(Q_\Gamma)_{vw}=1$ if $v$ and $w$ are adjacent, and $(Q_\Gamma)_{vw}=0$ otherwise. It is the linking matrix of the
framed link above, and it represents the intersection form of $X_\Gamma$ in the basis of $H_2(X_\Gamma;\bbz)$ given by
the zero sections of the disc bundles. If $\det Q_\Gamma\ne0$, then $Y_\Gamma$ is a rational homology sphere and
$H_1(Y_\Gamma)\cong\operatorname{coker}Q_\Gamma$. We call $\Gamma$ \emph{negative definite} if $Q_\Gamma$ is. We write
$\delta_v$ for the valency of $v$. A \emph{node} is a vertex of valency at least three, and $v$ is a \emph{bad}
vertex if $e_v>-\delta_v$ \cite{OSplumbed}.

Let $\Gamma$ be star-shaped, with central weight $e$ and $m$ legs with weights $-a_{i1},\dots,-a_{ik_i}$
($a_{ij}\ge2$, read from the centre outwards), and put
\[
    \frac{\alpha_i}{\beta_i}=[a_{i1},\dots,a_{ik_i}]=a_{i1}-\cfrac{1}{a_{i2}-\cfrac{1}{\ddots-\cfrac{1}{a_{ik_i}}}}.
\]
Then $Y_\Gamma$ is the Seifert fibred space over $S^2$ with Seifert invariants
$(0;-e-m;(\alpha_1,\alpha_1-\beta_1),\dots,(\alpha_m,\alpha_m-\beta_m))$ in the notation of
\cite{Neumann81}; in the notation of Regina \cite{Regina} it is
$\mathrm{SFS}\,[S^2:(\alpha_1,\beta_1)\cdots(\alpha_m,\beta_m)\,(1,e)]$, as we checked by triangulating such plumbed
manifolds. Regina absorbs the term $(1,e)$ into the last exceptional fibre, which it writes as
$(\alpha_m,\beta_m+e\alpha_m)$; for instance, $\mathrm{SFS}\,[S^2:(3,1)(3,1)(3,2)(5,-3)]$ in Section~\ref{sec:u2} stands
for $\mathrm{SFS}\,[S^2:(3,1)(3,1)(3,2)(5,2)\,(1,-1)]$. A \emph{graph manifold} is a closed three-manifold whose JSJ pieces are Seifert fibred \cite{Waldhausen}.
Every graph manifold is the boundary of a plumbing, and one that is a rational homology sphere is $Y_\Gamma$ for a
plumbing tree $\Gamma$ \cite{Neumann81}. If $\Gamma$ is in normal form (see below) and is not a
chain, the tori corresponding to the chains between nodes cut $Y_\Gamma$ into Seifert fibred pieces, one for each
node, and form a reduced graph structure in the sense of Waldhausen \cite{Neumann81}. If the chain joining
two nodes has weights $-b_1,\dots,-b_r$ and $[b_1,\dots,b_r]=p/q$ in lowest terms, the regular fibres of the two
pieces meet $p$ times on their common torus \cite{Neumann81}; the chain is empty if and only if
$p=1$. For example, Regina identifies $\dbc(\kn{13n}{611})$ as
$\mathrm{SFS}\,[D:(2,1)(3,2)]\cup\mathrm{SFS}\,[D:(3,1)(3,2)]$. The intersection matrix of the normal form of
$\dbc(\kn{13n}{611})$ has two positive eigenvalues, while the normal form of $-\dbc(\kn{13n}{611})$, shown in
Figure~\ref{fig:plumbing}(b), is negative definite; each has one node for each of the two pieces. Reversing the orientation of a piece replaces every fibre
$(\alpha,\beta)$ by $(\alpha,\alpha-\beta)$. Accordingly, the legs $[2]$ and $[3]$ of the left node in
Figure~\ref{fig:plumbing}(b) give the fibres $(2,1)$ and $(3,1)$ of the first piece with the reversed orientation,
and the legs $[3]$ and $[2,2]$ of the right node give the fibres $(3,1)$ and $(3,2)$ of the second piece, which the
reversal interchanges.

Different plumbing trees can have homeomorphic plumbed manifolds, and Neumann's plumbing calculus describes exactly
when this happens \cite{Neumann81}. We use the following moves, which we call \emph{Neumann moves}
(Figure~\ref{fig:moves}), and their inverses.
\begin{enumerate}
\item[(a)] \emph{Blowing down a vertex of valency two.} A vertex of weight $\varepsilon=\pm1$ with two neighbours is
deleted, its neighbours are joined by an edge, and their weights are changed by $-\varepsilon$.
\item[(b)] \emph{Blowing down a vertex of valency one.} A vertex of weight $\varepsilon=\pm1$ with one neighbour is
deleted, and the weight of the neighbour is changed by $-\varepsilon$.
\item[(c)] \emph{$0$-chain absorption.} A vertex of weight $0$ and valency two is deleted together with its two
neighbours, which are replaced by a single vertex whose weight is the sum of their weights and which is joined to all
their other neighbours.
\end{enumerate}
Neumann moves do not change the oriented homeomorphism type of the plumbed manifold \cite{Neumann81}. Up to a change of
basis, a blow-up or blow-down with $\varepsilon=-1$ adds or removes an orthogonal summand $(-1)$ of the intersection
matrix, so it preserves negative definiteness; each of the other moves changes the number of positive eigenvalues
\cite{Neumann89}.

\begin{figure}[htbp]
\centering
\begin{tikzpicture}[vx/.style={circle,fill,inner sep=1.7pt}, lab/.style={font=\normalsize}, label distance=2pt,
  eq/.style={font=\Large}, panel/.style={font=\normalsize}]
\def\fanL#1{\draw (#1)--++(-0.62,0.2) (#1)--++(-0.68,0) (#1)--++(-0.62,-0.2);}
\def\fanR#1{\draw (#1)--++(0.62,0.2) (#1)--++(0.68,0) (#1)--++(0.62,-0.2);}
\begin{scope}
\node[vx,label={[lab]above:$e_1+\varepsilon$}] (a1) at (0,0) {};
\node[vx,label={[lab]above:$\varepsilon$}] (a2) at (1.2,0) {};
\node[vx,label={[lab]above:$e_2+\varepsilon$}] (a3) at (2.4,0) {};
\draw (a1)--(a2)--(a3); \fanL{a1} \fanR{a3}
\node[eq] at (1.2,-0.82) {$\wr$};
\node[vx,label={[lab]above:$e_1$}] (a4) at (0.55,-1.8) {};
\node[vx,label={[lab]above:$e_2$}] (a5) at (1.85,-1.8) {};
\draw (a4)--(a5); \fanL{a4} \fanR{a5}
\node[panel] at (1.2,-2.75) {(a)};
\end{scope}
\begin{scope}[xshift=5.3cm]
\node[vx,label={[lab]above:$e+\varepsilon$}] (b1) at (0,0) {};
\node[vx,label={[lab]above:$\varepsilon$}] (b2) at (1.2,0) {};
\draw (b1)--(b2); \fanL{b1}
\node[eq] at (0.45,-0.82) {$\wr$};
\node[vx,label={[lab]above:$e$}] (b3) at (0.45,-1.8) {};
\fanL{b3}
\node[panel] at (0.45,-2.75) {(b)};
\end{scope}
\begin{scope}[xshift=8.8cm]
\node[vx,label={[lab]above:$e_1$}] (c1) at (0,0) {};
\node[vx,label={[lab]above:$0$}] (c2) at (1.2,0) {};
\node[vx,label={[lab]above:$e_2$}] (c3) at (2.4,0) {};
\draw (c1)--(c2)--(c3); \fanL{c1} \fanR{c3}
\node[eq] at (1.2,-0.82) {$\wr$};
\node[vx,label={[lab,label distance=5pt]above:$e_1+e_2$}] (c4) at (1.2,-1.8) {};
\fanL{c4} \fanR{c4}
\node[panel] at (1.2,-2.75) {(c)};
\end{scope}
\end{tikzpicture}
\caption{Neumann moves on plumbing trees, where $\varepsilon=\pm1$ and the fans of edges stand for the rest of the
tree: blowing down a vertex of valency two (a) or one (b), and $0$-chain absorption (c). In each case the two trees
have orientation-preserving homeomorphic plumbed manifolds.}
\label{fig:moves}
\end{figure}

We use the normal form of \cite{Neumann89}: a plumbing tree is in \emph{normal form} if every vertex of valency at
most two has weight at most $-2$. If $Y$ bounds a plumbing tree, then it bounds a unique plumbing forest in normal
form, the \emph{normal form} of $Y$ \cite[Thm.~1.2]{Neumann89}; for the graph manifolds in this paper it is a tree,
as we find below.
The normal form of $-Y_\Gamma$ is in general not obtained from that of $Y_\Gamma$ by negating the weights; we compute
it from the Seifert invariants of $-Y_\Gamma$.

\begin{theorem}[Neumann {\cite[Thm.~1.2]{Neumann89}}]\label{thm:neumann}
Let $\Gamma_0$ be the normal form of an oriented graph manifold $Y$. If $\Gamma$ is a plumbing tree with
$Y_\Gamma\cong Y$, then $Q_\Gamma$ has at least as many positive eigenvalues as $Q_{\Gamma_0}$.
\end{theorem}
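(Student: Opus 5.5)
The plan is to prove Theorem~\ref{thm:neumann} by reducing an arbitrary plumbing tree $\Gamma$ with $Y_\Gamma\cong Y$ to the normal form $\Gamma_0$ through moves that never increase the number of positive eigenvalues. First, we would invoke Neumann's plumbing calculus \cite{Neumann81}: since $Y_\Gamma$ and $Y_{\Gamma_0}$ are orientation-preservingly homeomorphic, $\Gamma$ and $\Gamma_0$ are related by a finite sequence of Neumann moves (a)--(c) and their inverses. The unordered sequence is not enough, because an inverse move can raise the positive index. So we would instead give a direct normalisation algorithm applied to $\Gamma$, and then use the uniqueness of the normal form \cite[Thm.~1.2]{Neumann89} to conclude that its output is $\Gamma_0$.

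The algorithm would repeatedly do one of two things to a vertex $v$ of valency at most two with $e_v\ge-1$. If $e_v=\pm1$, we blow it down by move (a) or (b). If $e_v=0$ and $\delta_v=2$, we apply $0$-chain absorption (c). If $e_v=0$ with $\delta_v\le1$, or $e_v\ge2$, we first perform $(-1)$-blow-ups adjacent to $v$. These split off $(-1)$ summands and so do not change the positive index; they bring the weight down until a $\pm1$ or $0$ vertex appears. For $e_v\ge2$, blowing up $-1$ on an edge at $v$ changes $e_v$ to $e_v+1$, which goes the wrong way. Blowing up $+1$ decreases $e_v$ by one and splits off a $(+1)$ summand, which raises the positive index by one. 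So for $e_v\ge 2$ the right tool is the standard chain relation $[e]\sim$ a $(+1)$ vertex followed by a chain of $(-2)$'s, i.e.\ the Riemenschneider-type duality. We would verify by a Sylvester-inertia computation that this replacement does not increase the positive index.

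The key step is the eigenvalue bookkeeping: each reducing move, applied in the direction of the normal form, changes the positive index by $0$ or $-1$, never $+1$. For a $\pm1$ blow-down this follows from the orthogonal splitting: after a change of basis $Q_\Gamma=Q_{\Gamma'}\oplus(\varepsilon)$, so the index drops by one when $\varepsilon=+1$ and is unchanged when $\varepsilon=-1$. For $0$-chain absorption, we would change basis by subtracting multiples of the $0$-vertex class. This exhibits $Q_\Gamma$ as congruent to $Q_{\Gamma'}\oplus\left(\begin{smallmatrix}0&1\\1&0\end{smallmatrix}\right)$, a hyperbolic summand, so the index drops by exactly one. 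Termination follows because the complexity $\sum$(vertices) or $\sum\max(e_v+2,0)$ decreases. Uniqueness then identifies the terminal tree with $\Gamma_0$, which gives $\#\{\text{positive eigenvalues of }Q_\Gamma\}\ge\#\{\text{positive eigenvalues of }Q_{\Gamma_0}\}$.

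The main obstacle is twofold. One part is the vertices of weight $\ge 2$, or of weight $0$ at a leaf, where no move directly reduces the tree. The other is ensuring the process stays within trees rather than forests or graphs with cycles; a $0$-leaf, for instance, yields a connected-sum splitting. There I would first try to handle a $0$-leaf by absorbing it together with its neighbour, which produces a forest whose components are again handled inductively. I would also check that the congruence argument still gives a hyperbolic summand in that case.
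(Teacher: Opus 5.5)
The paper does not prove this statement; it quotes it from Neumann \cite[Thm.~1.2]{Neumann89}. Your strategy is the right one: normalise $\Gamma$ by steps that never raise the number $e_+$ of positive eigenvalues, then identify the result with $\Gamma_0$ by uniqueness of the normal form. Be aware that this reduces the theorem to that uniqueness, which is the substantial part of Neumann's result. Your bookkeeping for $\pm1$ blow-downs and for $0$-chain absorption is correct; for the latter one gets $Q_\Gamma\cong Q_{\Gamma'}\oplus\left(\begin{smallmatrix}0&1\\1&e_w\end{smallmatrix}\right)$ after negating the basis vectors of one branch to fix an edge sign.

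As written, however, the steps you flag as obstacles do not go through. The first rests on a sign error. By move (a) in Figure~\ref{fig:moves}, a blow-up with weight $\varepsilon$ on an edge changes both endpoint weights by $+\varepsilon$, so a $(-1)$-blow-up on an edge at $v$ \emph{lowers} $e_v$. Vertices with $e_v\ge2$ and $\delta_v\le2$ are therefore no obstacle. Do $e_v-1$ such blow-ups, starting with a $(-1)$-leaf if $\delta_v=0$; then $v$ has weight $+1$. Blowing it down replaces $v$ by a chain of $e_v-1$ vertices of weight $-2$ and lowers each original neighbour's weight by one, and $e_+$ drops by exactly one. The relation you propose instead is not correct as stated: the chain $[1,-2]$ blows down to $[-3]$, whereas $[3]\sim[-2,-2]$, and these are distinct normal forms.

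The second problem is that $(-1)$-blow-ups cannot remove a $0$-leaf. Blowing up on its edge and blowing down the old leaf just produces a new $0$-leaf with the neighbour's weight shifted. A $0$-leaf $v$ at $w$ genuinely splits $Y_\Gamma$ as the connected sum over the components of $\Gamma\setminus\{v,w\}$. So your fallback of deleting $v$ and $w$ is a necessary move, not an option; it lowers $e_+$ by one, since $Q_\Gamma\cong Q_{\Gamma\setminus\{v,w\}}\oplus\left(\begin{smallmatrix}e_w&1\\1&0\end{smallmatrix}\right)$. You then also need a move deleting an isolated $\pm1$ vertex, and you must invoke uniqueness for normal-form \emph{forests}, as the paper states it. For the same reason, your opening claim that $\Gamma$ and $\Gamma_0$ are related by moves (a)--(c) alone is false, though you do not use it.

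The third problem is that neither of your termination measures is monotone. A $(-1)$-blow-down raises its neighbours' weights, and the $e_v\ge2$ step adds vertices. Order the pair $(e_+(Q_\Gamma),|V|)$ lexicographically instead:
\begin{itemize}
\item the steps removing a $(-1)$-vertex (blow-down, or deletion of an isolated vertex) keep $e_+$ and lower $|V|$;
\item all other steps lower $e_+$, counting the $e_v\ge2$ replacement as a single step.
\end{itemize}

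With these repairs, every rational-homology-sphere forest not in normal form admits a step. The process stops at a normal-form forest, uniqueness identifies it with $\Gamma_0$, and $e_+(Q_\Gamma)\ge e_+(Q_{\Gamma_0})$ follows.
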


\begin{corollary}\label{cor:negdef}
Let $Y$ be an oriented graph manifold that is a rational homology sphere and whose normal form is a tree. Then $Y$
bounds a negative-definite plumbing tree if and only if its normal form is negative definite. In that case the normal
form is the negative-definite plumbing tree bounded by $Y$ with the fewest vertices \cite{Neumann89}.
\end{corollary}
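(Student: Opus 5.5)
The corollary follows from Theorem~\ref{thm:neumann} together with the observation that negative definiteness of a nondegenerate symmetric matrix means having no positive eigenvalues. Let $\Gamma_0$ be the normal form of $Y$, which by hypothesis is a tree.

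For the forward implication, suppose $Y$ bounds a negative-definite plumbing tree $\Gamma$, so $Y_\Gamma\cong Y$ as oriented manifolds and $Q_\Gamma$ has no positive eigenvalues. Theorem~\ref{thm:neumann} says that $Q_{\Gamma_0}$ has at most as many positive eigenvalues as $Q_\Gamma$, hence none. It remains to exclude zero eigenvalues. Since $Y$ is a rational homology sphere and $Y_{\Gamma_0}\cong Y$, the group $H_1(Y_{\Gamma_0};\bbq)$ vanishes. Because $\Gamma_0$ is a tree, the surgery description gives $H_1(Y_{\Gamma_0};\bbz)\cong\operatorname{coker}Q_{\Gamma_0}$, so $\det Q_{\Gamma_0}\ne0$. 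Thus $Q_{\Gamma_0}$ has only negative eigenvalues, that is, $\Gamma_0$ is negative definite. The reverse implication is immediate, since $\Gamma_0$ is a plumbing tree with $Y_{\Gamma_0}\cong Y$.

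The step needing the most care is the minimality assertion, which is the statement attributed to \cite{Neumann89}. The plan is to show that any negative-definite plumbing tree $\Gamma$ with $Y_\Gamma\cong Y$ reduces to $\Gamma_0$ by Neumann moves that never increase the number of vertices, and then to argue as follows.

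1. Neumann's plumbing calculus connects $\Gamma$ to $\Gamma_0$. Reduction to normal form consists of blow-downs of vertices of weight $\pm1$ and valency at most two, and $0$-chain absorptions; each of these strictly decreases the number of vertices. Other normalising moves, such as the treatment of vertices of weight $0$ or valency one, also reduce the vertex count.
2. As noted after Figure~\ref{fig:moves}, every move other than a $(-1)$ blow-down changes the number of positive eigenvalues, and a $(-1)$ blow-down splits off an orthogonal summand $(-1)$.
3. Inertia therefore constrains the reduction. Starting from a negative-definite $\Gamma$, a $0$-vertex of valency two cannot occur, since a negative-definite matrix has negative diagonal entries. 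Likewise a $+1$ vertex cannot occur. Hence the reduction of $\Gamma$ uses only $(-1)$ blow-downs, each removing one vertex and preserving negative definiteness.

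Since the normal form is unique, this reduction ends at $\Gamma_0$, so $\Gamma_0$ has at most as many vertices as $\Gamma$. If one prefers not to reprove the reduction algorithm, this part can be cited directly from \cite{Neumann89}.
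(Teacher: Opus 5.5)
Your proof of the equivalence is the argument the paper intends. Theorem~\ref{thm:neumann} shows that $Q_{\Gamma_0}$ has no positive eigenvalues, and $\det Q_{\Gamma_0}\ne0$ because $\operatorname{coker}Q_{\Gamma_0}\cong H_1(Y)$ is finite.

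You differ from the paper only on the minimality statement. The paper simply cites it from \cite{Neumann89}, whereas you prove it. The part of your argument that does the work is step~3:
\begin{itemize}
\item A negative-definite tree has all weights at most $-1$. So if it is not in normal form, it has a vertex of weight exactly $-1$ and valency at most two.
\item Blowing this vertex down removes one vertex. Since it splits off an orthogonal summand $(-1)$, it leaves a negative-definite tree.
\item Iterating ends at a tree in normal form, which is $\Gamma_0$ by uniqueness. Hence $\Gamma_0$ has at most as many vertices as $\Gamma$.
\end{itemize}
This is correct. It makes the corollary self-contained modulo Theorem~\ref{thm:neumann} and the uniqueness of the normal form, so you need not cite the minimality assertion itself.

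Your step~1, however, is false as a general statement and should be dropped: normalisation does not in general consist only of moves that reduce the number of vertices. For example, the chains $(3)$, $(2,-1)$, $(1,-1,-2)$ and $(-2,-2)$ all have the same plumbed manifold. They are related by two blow-ups with $\varepsilon=-1$ followed by a blow-down of the leaf of weight $1$. The last chain is the normal form, and no blow-down or $0$-chain absorption applies to the single vertex of weight $3$, so here the vertex count must go up. Likewise, your appeal to the calculus connecting $\Gamma$ to $\Gamma_0$ is not what carries the argument. What does is that the greedy reduction by $(-1)$ blow-downs never leaves the class of negative-definite trees, and this is exactly your step~3.
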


Suppose that $Q_\Gamma$ is negative definite. A covector $c\in\bbz^V$ is \emph{characteristic} if
$c_v\equiv e_v\pmod2$ for every $v$; we write $c^2=c^TQ_\Gamma^{-1}c$. Restriction identifies the classes of
characteristic covectors modulo $2Q_\Gamma\bbz^V$ with $\mathrm{Spin}^c(Y_\Gamma)$; we write $[c]$ for the class of $c$.

\begin{theorem}[Ozsv\'ath--Szab\'o \cite{OSplumbed}]\label{thm:lattice}
Let $\Gamma$ be a negative-definite plumbing tree with at most two bad vertices. Then
\[
    d(Y_\Gamma,\mathfrak t)=\max\Bigl\{\frac{c^2+|V|}{4}\Bigm| c\ \text{characteristic},\ [c]=\mathfrak t\Bigr\}
    \qquad\text{for every }\mathfrak t\in\mathrm{Spin}^c(Y_\Gamma).
\]
\end{theorem}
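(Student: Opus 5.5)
The statement to prove is Theorem~\ref{thm:lattice} of Ozsv\'ath and Szab\'o, which the paper cites rather than proves. My plan is to reconstruct their argument in two halves: an inequality valid for every negative-definite filling, and a reverse inequality that uses the hypothesis on bad vertices.

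\textbf{Upper bound.} The first half needs no hypothesis on bad vertices. The plumbing $X_\Gamma$ is a negative-definite four-manifold with $\partial X_\Gamma=Y_\Gamma$ and $b_1(X_\Gamma)=0$. Its $\mathrm{Spin}^c$ structures $\mathfrak s$ correspond to characteristic covectors $c=c_1(\mathfrak s)$, and restriction to the boundary gives $[c]$. I would apply the inequality of \cite{OS03} for negative-definite fillings,
\[
c_1(\mathfrak s)^2+b_2(X)\le 4d(Y,\mathfrak s|_Y).
\]
Since $b_2(X_\Gamma)=|V|$, this shows that the right-hand side of Theorem~\ref{thm:lattice} is at most $d(Y_\Gamma,\mathfrak t)$.

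\textbf{Reverse inequality.} For this I would follow \cite{OSplumbed}. Define the lattice homology $\mathbb H^+(\Gamma)$: the module of finitely supported maps $\phi$ from characteristic covectors to $\mathcal T^+=\mathbb F_2[U^{-1}]$ satisfying the adjunction relations. These relations compare $c$ with $c+2\,\mathrm{PD}(v)$ through powers of $U$ determined by $\langle c,v\rangle+e_v$.

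The cobordism maps induced by the two-handles of $X_\Gamma$ assemble into a map $T^+\colon HF^+(-Y_\Gamma,\mathfrak t)\to\mathbb H^+(\Gamma,\mathfrak t)$. The argument then has three steps:
\begin{enumerate}
\item Show that $T^+$ is an isomorphism. This is done by induction on $|V|$, using the surgery exact triangle: delete a vertex, or change its weight, and compare the triangle with a corresponding exact sequence on the combinatorial side.
\item Build the base case of the induction from trees with no bad vertices. There $Y_\Gamma$ is an $L$-space, and the combinatorics reduces to counting maximisers of $c^2$.
\item Extend to one bad vertex, and then to two, by varying the weight of a bad vertex. Making that weight sufficiently negative produces a tree with fewer bad vertices, and one tracks the groups through the exact triangle.
\end{enumerate}

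Once $T^+$ is an isomorphism, the bottom of the tower in $\mathbb H^+(\Gamma,\mathfrak t)$ sits in degree $\max (c^2+|V|)/4$, so $d$ is read off. Orientation conventions ($-Y_\Gamma$ versus $Y_\Gamma$, and the sign of $d$) must be matched carefully.

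\textbf{Main obstacle.} The hard step is making the exact triangles commute with the combinatorial maps, that is, proving that $T^+$ surjects onto the kernel of the corresponding combinatorial sequence. The bad-vertex bound enters exactly here. With two bad vertices, the intermediate manifolds in the induction have at most one bad vertex, where injectivity is known; with more, this control fails. If the full isomorphism proves too hard, I would settle for the weaker statement that the tower bottoms agree. That statement only requires following the class in the top-degree part of the image of $HF^\infty$ through the triangle.
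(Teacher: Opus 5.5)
The paper gives no proof of this statement. It quotes it from \cite{OSplumbed}, so your sketch has to be measured against that argument. Your first half is correct: $X_\Gamma$ is simply connected and negative definite, its $\mathrm{Spin}^c$ structures are the characteristic covectors, and $c_1(\mathfrak s)^2+b_2\le 4d$ gives $\max\{(c^2+|V|)/4\}\le d(Y_\Gamma,\mathfrak t)$. For at most one bad vertex, your outline of the reverse inequality is the Ozsv\'ath--Szab\'o argument: $T^+$ is an isomorphism, proved by induction through exact triangles, and $d(-Y_\Gamma,\mathfrak t)=-\max$ is read off the tower.

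The gap is the case of two bad vertices, and it has three parts.

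\textbf{The target is false.} Your main target, that $T^+$ is an isomorphism, fails in general rather than being merely hard. $\mathbb H^+(\Gamma)$ is supported in even degrees and $T^+$ preserves the absolute grading, so $T^+$ kills $HF^+_{\mathrm{odd}}(-Y_\Gamma)$. That group need not vanish when there are two bad vertices, which is why the paper takes $HF_{\mathrm{red}}$ from the lattice only when there is at most one.

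\textbf{The induction is misdescribed.} If you lower the weight of a bad vertex $v$ one step at a time, the intermediate trees keep two bad vertices until $e_v\le-\delta_v$. The tree with at most one bad vertex in each triangle is $\Gamma-v$, since deleting a vertex never makes a neighbour bad. What you need from it is not injectivity but the fact that $HF^+(-Y_{\Gamma-v})$ is supported in even degrees.

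\textbf{The missing parity argument.} Let $\Gamma'$ be $\Gamma$ with $e_v$ lowered by one, and consider the exact triangle
\[
HF^+(-Y_{\Gamma-v})\xrightarrow{\;f\;}HF^+(-Y_\Gamma)\xrightarrow{\;g\;}HF^+(-Y_{\Gamma'})\longrightarrow HF^+(-Y_{\Gamma-v}).
\]
Here $f$ comes from a positive-definite two-handle cobordism, so it changes the parity of the $\bbz/2$-grading; $g$ comes from a negative-definite one. Since the source of $f$ is even, its image is odd, so $g$ is injective on even degrees. In particular $g$ is injective on the bottom of each tower. Note that it is the bottom of the image of $HF^\infty$, not its ``top-degree part'', that determines $d$.

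Push that bottom class through $g$ and use the formula for $\Gamma'$. This bounds $d(Y_\Gamma,\mathfrak t)$ above by the maximum of $(c_1^2+b_2)/4$ on $X_{\Gamma'}$ with the two-handle attached. That manifold is $X_\Gamma\#\overline{\mathbb{CP}}^2$, which has the same maximum as $\Gamma$. An upward induction on $e_v$ then gives the formula, starting from a weight where $v$ is no longer bad, so that the one-bad-vertex theorem applies. Without this parity argument, nothing in your fallback guarantees that the bottom of the tower survives the triangle.
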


If $\Gamma$ has at most one bad vertex, then $HF^+(-Y_\Gamma)$ is isomorphic to the combinatorial module
$\mathbb H^+(\Gamma)$ of Ozsv\'ath and Szab\'o \cite{OSplumbed}, which is determined by the graded root of
$\Gamma$ \cite{Nemethi}; the almost rational graphs of N\'emethi include those with at most one bad
vertex \cite{Nemethi}.

Negative-definite plumbing trees are also the natural setting for the $q$-series invariants $\hat Z$ of $3$-manifolds
introduced by Gukov, Pei, Putrov and Vafa \cite{GPV,GPPV}. For such a tree $\Gamma$, the invariants of $Y_\Gamma$ are
given by an explicit formula in terms of $Q_\Gamma$ \cite{GPPV}, which is unchanged by Neumann moves between weakly
negative-definite plumbing trees \cite{GM}. The negative-definite normal forms of Section~\ref{sec:geometry} are
therefore canonical inputs for $\hat Z$ of the corresponding double branched covers.

\para{Computation of the normal form of $\dbc(K)$}
We construct $\dbc(K)$ as the cyclic double cover of the orbifold obtained by $(2,0)$-filling the exterior of $K$ in
SnapPy \cite{SnapPy}, and recognise a triangulation of it with Regina \cite{Regina}, repeating with random
retriangulations and, if necessary, a search of the Pachner graph. When Regina identifies $\dbc(K)$ as a Seifert
fibred space or as a graph manifold with at most three Seifert pieces, it returns the Seifert invariants of the pieces
and the gluing matrices. From these data we form \emph{candidate trees}, with one node for each piece, as follows. We
choose an orientation of each piece. The legs at a node are the Hirzebruch--Jung expansions of the fibre invariants of
the oriented piece, and the chain joining two nodes is the expansion of $p/q$, where the regular fibres of the two
pieces meet $p$ times and $q$ is a unit modulo $p$. All node weights but one range over $[-40,40]$, and the last one is
then determined by $|\det Q_\Gamma|=\det K$, since $\det Q_\Gamma$ is an affine function of each weight; we skip the
choices for which $\det Q_\Gamma$ does not depend on the last weight, and keep last weights of absolute value at most
$120$. Every candidate is in normal form, because its nodes have valency at least three and its other weights are at
most $-2$. We accept a candidate if its plumbed manifold has a Regina name of $\dbc(K)$; we compute this name from the
tree, with conventions relating plumbing graphs to Regina's names that we fixed by triangulating plumbed manifolds.
Regina's names determine a manifold only up to orientation, so by the uniqueness of the normal form, every accepted tree
is the normal form of $\dbc(K)$ or of $-\dbc(K)$.

The search only produces candidates, and the conclusions below do not depend on its range: they rest on the accepted
trees and on the uniqueness of the normal form. For each double branched cover of Section~\ref{sec:geometry} that
Regina identifies as a graph manifold, either some accepted tree is negative definite, or two non-isomorphic weighted
trees are accepted and neither of them is negative definite; in the second case the two trees are the normal forms of $\dbc(K)$
and $-\dbc(K)$. By Corollary~\ref{cor:negdef}, this decides whether $\dbc(K)$ or $-\dbc(K)$ bounds a
negative-definite plumbing tree. The trees used in this decision, the negative-definite one in the first case and both
in the second, were also identified with $\pm\dbc(K)$ by triangulating their plumbed manifolds as surgery on the
plumbing link and recognising them with Regina; this identification does not use the conventions above. Their
inertia and determinants were computed exactly. The orientation of a negative-definite tree is fixed by comparing the linking
pairing of $\operatorname{coker}Q_\Gamma$ with that of $\dbc(K)$, and checked with the Casson--Walker invariant
(Section~\ref{sec:verify}).

\para{Algebraic diagrams}
When Regina does not recognise $\dbc(K)$, we read a plumbing tree from a diagram of $K$ instead. We regard a diagram as
a $4$-valent plane graph whose vertices are tangles, at first the crossings. If two vertices are joined by two edges
that bound a face, we merge them into one vertex, whose tangle is the sum of the two tangles. A diagram is
\emph{algebraic} if repeated merging leads to a single vertex. The boundary spheres of the tangles formed along the way
are Conway spheres that cut $K$ into the crossings, the trivial tangle outside the last sphere, and pieces bounded by
three spheres through which two strands pass. Hence $K$ is arborescent in the sense of
Bonahon and Siebenmann, who note that these are the algebraic knots of Conway \cite{BS}.

We draw each tangle $T$ in a square with its four ends at the corners, and call its two closures the one joining the
two upper ends and the two lower ends, and the one joining the two left ends and the two right ends. We fix
$\eta\in\{1,-1\}$ and attach to $T$ a plumbing tree $\Gamma_T$ with a distinguished vertex $r_T$, its root, by the
following rules (Figure~\ref{fig:algebraic}).
\begin{enumerate}
\item[(a)] A crossing whose overstrand joins the lower left corner to the upper right corner gives a single vertex of
weight $\eta$. A crossing in the other position is a quarter turn of such a crossing, and (b) applies.
\item[(b)] Turning the square of $T$ by a quarter turn adds a new root of weight $0$, joined by an edge to the old one.
Turning it by a half turn does not change $\Gamma_T$.
\item[(c)] In a merge, the two squares are placed side by side, so that the two edges of the face join the right ends of
the first tangle to the left ends of the second, after turning the squares if necessary. The roots of the two
trees are then identified, and their weights are added.
\end{enumerate}
After the last merge, we turn the square by a quarter turn if necessary, so that the remaining two edges join the upper
ends and the lower ends, and we call the resulting tree $\Gamma_D$. In a PD code, the four edges at a crossing are
listed counterclockwise, starting with the incoming understrand; placed at the upper left, lower left, lower right and
upper right corners, in this order, they form a crossing as in (a).

\begin{figure}[htbp]
\centering
\begin{tikzpicture}[scale=1.12, vx/.style={circle,fill,inner sep=1.5pt}, rt/.style={circle,draw,fill=white,inner sep=1.7pt},
  lab/.style={font=\footnotesize}, sq/.style={gray}, st/.style={thick},
  box/.style={draw,fill=white,minimum size=0.62cm,inner sep=0pt}]
\begin{scope}
  \draw[sq] (0,0) rectangle (1.6,1.6);
  \draw[st] (0,0) -- (1.6,1.6);
  \draw[st] (0,1.6) -- (0.66,0.94);
  \draw[st] (0.94,0.66) -- (1.6,0);
  \node[rt,label={[lab]right:$\eta$}] at (0.8,-0.6) {};
  \node at (0.8,-2.35) {(a)};
\end{scope}
\begin{scope}[xshift=2.6cm]
  \draw[sq] (0,0) rectangle (1.6,1.6);
  \draw[st] (0,0) -- (1.6,1.6) (0,1.6) -- (1.6,0);
  \node[box] at (0.8,0.8) {\rotatebox{90}{$T$}};
  \draw (0.8,-1.05) -- (0.45,-1.65) -- (1.15,-1.65) -- cycle;
  \node[lab] at (0.8,-1.92) {$\Gamma_T$};
  \draw (0.8,-0.6) -- (0.8,-1.05);
  \node[vx] at (0.8,-1.05) {};
  \node[rt,label={[lab]right:$0$}] at (0.8,-0.6) {};
  \node at (0.8,-2.35) {(b)};
\end{scope}
\begin{scope}[xshift=5.2cm]
  \draw[sq] (0,0) rectangle (2.7,1.6);
  \draw[st] (0,1.6) -- (0.75,0.8) -- (0,0);
  \draw[st] (0.75,0.8) .. controls (1.15,1.3) and (1.55,1.3) .. (1.95,0.8);
  \draw[st] (0.75,0.8) .. controls (1.15,0.3) and (1.55,0.3) .. (1.95,0.8);
  \draw[st] (2.7,1.6) -- (1.95,0.8) -- (2.7,0);
  \node[box] at (0.75,0.8) {$T_1$};
  \node[box] at (1.95,0.8) {$T_2$};
  \draw (1.35,-0.6) -- (0.6,-1.65) -- (1.15,-1.65) -- cycle;
  \draw (1.35,-0.6) -- (1.55,-1.65) -- (2.1,-1.65) -- cycle;
  \node[lab] at (0.88,-1.92) {$\Gamma_{T_1}$};
  \node[lab] at (1.83,-1.92) {$\Gamma_{T_2}$};
  \node[rt,label={[lab]right:$e_1+e_2$}] at (1.35,-0.6) {};
  \node at (1.35,-2.35) {(c)};
\end{scope}
\begin{scope}[xshift=8.9cm]
  \draw[sq] (0,0) rectangle (1.6,1.6);
  \draw[st] (0,0) -- (1.6,1.6) (0,1.6) -- (1.6,0);
  \draw[st] (0,1.6) .. controls (-0.15,1.95) and (1.75,1.95) .. (1.6,1.6);
  \draw[st] (0,0) .. controls (-0.15,-0.35) and (1.75,-0.35) .. (1.6,0);
  \node[box] at (0.8,0.8) {$T$};
  \draw (0.8,-0.6) -- (0.45,-1.65) -- (1.15,-1.65) -- cycle;
  \node[lab] at (0.8,-1.92) {$\Gamma_D$};
  \node[vx] at (0.8,-0.6) {};
  \node at (0.8,-2.35) {(d)};
\end{scope}
\end{tikzpicture}
\caption{The trees of Lemma~\ref{lem:algebraic}; roots are drawn as open circles. (a) A crossing as in rule (a). (b) A
quarter turn of the square of $T$ adds a new root of weight $0$. (c) A merge identifies the roots of $\Gamma_{T_1}$ and
$\Gamma_{T_2}$, whose weights $e_1$ and $e_2$ are added. (d) After the closure that joins the upper ends and the lower
ends, the root of the last tree is an ordinary vertex of $\Gamma_D$.}
\label{fig:algebraic}
\end{figure}

\begin{lemma}\label{lem:algebraic}
Let $D$ be an algebraic diagram of $K$. Then $Y_{\Gamma_D}\cong\pm\dbc(K)$, and the trees of $D$ for the two values of
$\eta$ differ by the sign of every weight; their plumbed manifolds are $\dbc(K)$ and $-\dbc(K)$.
\end{lemma}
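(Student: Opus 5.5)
We prove the lemma by induction over the merges, tracking a more refined statement for every tangle appearing in the process. For a tangle $T$ in its square, let $\widetilde T$ be the double cover of the ball branched along $T$; it is a solid torus or a more general plumbed piece with torus boundary. The plan is to prove that $\widetilde T$ is the plumbed $4$-manifold boundary piece obtained from $\Gamma_T$ by deleting a neighbourhood of the fibre over the root. More precisely, $\widetilde T$ is the complement, in $Y_{\Gamma_T}$ with the root's surgery undone, of the root unknot. Its boundary torus carries two slopes: the meridian of the root and its $e_{r_T}$-framed longitude. These should correspond to the lifts of the two closures of $T$. Equivalently, filling $\widetilde T$ along the slope of the upper/lower closure gives $\pm Y_{\Gamma_T}$. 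This is the Bonahon--Siebenmann description of branched covers of arborescent tangles \cite{BS}, and the rules (a)--(c) are exactly its combinatorial shadow. The proof consists in checking each rule against the corresponding operation on covers, with the sign $\eta$ recording a choice of orientation of the cover that is made once and used consistently.

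\emph{Base case (a).} The double cover of $B^3$ branched along two arcs is a solid torus. For a crossing, the closures lift to a curve meeting the meridian once. The upper/lower closure of a single crossing is the unknot, with double cover $S^3$; the left/right closure is also an unknot. Concretely, the two closure slopes differ by the $\pm1$ twist. This matches a single vertex of weight $\eta$, since $Y_{(\pm1)}=S^3$, with the root meridian and the $\eta$-framed longitude as the two slopes.

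\emph{Rule (b).} A quarter turn exchanges the two closures. On the torus it acts by the matrix $\left(\begin{smallmatrix}0&-1\\1&0\end{smallmatrix}\right)$. This is realised in plumbing by adding a new root of weight $0$ joined to the old one, the standard slam-dunk/Hopf-link relation in which the new meridian is the old framed longitude and vice versa. A half turn acts by $-I$, which fixes every slope, so $\Gamma_T$ is unchanged.

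\emph{Rule (c).} The sum of tangles lifts to gluing the two covers along annuli, which amounts to a fibre sum. In plumbing language, the roots are identified and their weights add, which is the usual description of the plumbing along a common circle fibre. The lift of the final closure then caps off the root by surgery, giving $Y_{\Gamma_D}\cong\pm\dbc(K)$.

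For the second assertion, reflect the diagram in the plane. This mirrors $K$, replaces every crossing of type (a) by a crossing whose tree has weight $-\eta$, preserves the pattern of turns and merges, and negates every weight, including the $0$'s trivially. So the tree for $-\eta$ is the tree for $\eta$ with all weights negated. Negating all weights of a plumbing graph reverses the orientation of its plumbed manifold: the link is mirrored and the framings negated. Hence the two values of $\eta$ give $Y$ and $-Y$, and since one of them is $\dbc(K)$, the other is $-\dbc(K)$.

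The main obstacle is fixing signs and orientations consistently. We must check that the quarter-turn convention in (b) and the PD-code placement produce a single global orientation of the cover, rather than one that varies with each step. We would handle this by verifying the base case and rule (b) explicitly on the torus boundary with fixed bases, checking that the composite sign agrees with the rational-tangle continued fraction, i.e.\ that two-bridge knots give $L(p,q)$ correctly. The linking-pairing comparison already used in the paper would then fix which of $\pm\eta$ gives $\dbc(K)$.
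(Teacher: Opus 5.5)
Your proposal is essentially the paper's proof. It uses the same inductive statement: the cover of the ball of $T$ is the plumbed manifold of $\Gamma_T$ with a regular fibre over $r_T$ removed, and the lifts of the left/right and upper/lower separating curves go to fibre and section. It checks each rule the same way: meridian $s\pm f$ for a crossing, $(f,s)\mapsto(s,-f)$ for a quarter turn realised by a new $0$-root, a common sign for a half turn, and fibre-to-fibre gluing in a merge. It ends with the same filling along the section.

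\emph{Sign consistency.} This is the step you leave open, and your two proposed checks do not settle it. Testing two-bridge knots would not prove consistency for general arborescent diagrams. The linking-pairing comparison is not needed, since the lemma does not ask which $\eta$ gives $\dbc(K)$. The paper closes the step in a few lines:
\begin{enumerate}
\item[(i)] Orient $S^3$, the plane and all covers compatibly, and orient $f_T,s_T$ so that $f_T\cdot s_T=1$. The remaining common sign is harmless, because the covering involution acts as $-1$ on the boundary torus.
\item[(ii)] All type-(a) crossings are the same tangle in an oriented square. Hence their meridians are $s_T+\eta'f_T$ with a single $\eta'$, which gives one global $\eta_0$.
\end{enumerate}
For the Euler numbers to add in a merge, you also need sections glued to sections, not only fibres to fibres; the paper states this explicitly.

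\emph{Second assertion.} The mirror-diagram detour is unnecessary, and as written it is not accurate. Mirroring turns a type-(a) crossing into a quarter-turned one, whose tree is a $0$-root joined to a vertex of weight $\eta$. That tree becomes a single vertex of weight $-\eta$ only after a blow-down, so the tree of the mirror diagram is not literally the negated tree. The direct argument is short. Rule (b) contributes weight $0$ and rules (a) and (c) only add weights. So every weight of $\Gamma_D$ is a sum of copies of $\eta$, and changing $\eta$ negates all weights. Then $Y_{-\Gamma}\cong -Y_\Gamma$ finishes the proof.
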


\begin{proof}
By \cite[Prop.~A.1 and its proof]{BS}, the lifts of these Conway spheres cut $\dbc(K)$ into solid tori, which are the double branched covers of
the crossings and of the trivial tangle, and copies of $S^1\times P$, with $P$ a disc with two holes, which are the
double branched covers of the pieces bounded by three spheres. In such a piece, with the two strands running from left
to right, the fibres project to arcs joining the two strands or to circles, and on each boundary sphere they are lifts
of the curve that separates the left ends from the right ends, while the curve that separates the upper ends from the
lower ends lifts to a section \cite[\S A.3]{BS}; in their notation, these curves have slopes $\infty$ and $0$.

We orient $S^3$ and the plane of $D$, and give the balls of the tangles and their double branched covers the induced
orientations. For a tangle $T$, let $f_T$ and $s_T$ be lifts of the curves that separate the left ends from
the right ends and the upper ends from the lower ends. They meet once on the boundary torus of the cover, and we orient
them so that their intersection number is $1$. This fixes the pair up to a common sign, which does not matter, since
the covering involution acts as $-1$ on the homology of the boundary torus. On the boundary torus of a plumbed manifold
with a regular fibre over the root removed, we use a fibre and the meridian of the removed fibred solid torus, which is
a section, oriented in the same way. We show by induction over the merges that there is a sign $\eta_0$, depending only
on these orientations, with the following property for $\eta=\eta_0$: the double branched cover of the ball of $T$ is
orientation-preservingly homeomorphic to the plumbed manifold of $\Gamma_T$ with a regular fibre over $r_T$ removed, by
a homeomorphism that takes $f_T$ to the fibre and $s_T$ to the section.

For a crossing as in (a), the cover is a solid torus whose meridian meets $f_T$ and $s_T$ once each, so
that its class is $s_T+\eta'f_T$ up to sign, with $\eta'=\pm1$. All these crossings are the same tangle in
an oriented square, so $\eta'$ is the same for all of them. Removing a regular fibre from the plumbed manifold of a
single vertex of weight $1$ or $-1$ leaves a solid torus whose meridian has the class $s+f$ for one of the
two weights and $s-f$ for the other; this determines $\eta_0$. A quarter turn exchanges the two separating
curves and replaces $(f_T,s_T)$ by $(s_T,-f_T)$ up to a common sign, so that a slope $r$ becomes
$-1/r$. Plumbing a new root of weight $0$ has the same effect, since the plumbing exchanges fibre and section and a
vertex of weight $0$ adds no twist. A half turn preserves both separating curves, so it changes the pair only by a
common sign. In a merge, the cover of the piece between the three spheres is glued to the covers of the two tangles
fibre to fibre and section to section, so the circle bundles over the two roots and over $P$ combine into one; this
identifies the roots and adds their Euler numbers. Finally, the solid torus over the trivial tangle outside the last
sphere has as its meridian the lift of the curve that separates its two arcs, which is the section. Gluing it in fills
in the removed fibre, so $Y_{\Gamma_D}\cong\dbc(K)$ for $\eta=\eta_0$. Since the rules only add weights, changing
$\eta$ negates every weight of $\Gamma_D$, and $Y_{-\Gamma}\cong-Y_\Gamma$ for every plumbing tree $\Gamma$.
\end{proof}

We reduce $\Gamma_D$ to normal form by Neumann moves for both values of $\eta$. By the uniqueness of the normal
form, the results are the normal forms of $\dbc(K)$ and $-\dbc(K)$, whichever algebraic diagram is used. We look for
algebraic diagrams by applying random Reidemeister moves to the KnotInfo diagram and simplifying, and we decide whether
a diagram is algebraic by merging as above.

\section{New unknotting numbers}\label{sec:new}

\subsection{Double branched covers}\label{sec:geometry}

We consider the \nOpenAll{} non-alternating knots with at most thirteen crossings whose unknotting number was known
to lie in $[1,2]$ (\nOpenUonetwo{} knots) or in $[2,3]$ or $[2,4]$ (\nOpenUtwothree{} knots) \cite{Lee}; they include
the \nUthreeNonalt{} non-alternating knots of Table~\ref{tab:u3-66}. For these ranges, excluding
$u(K)=1$ or $u(K)=2$ with the obstructions of Section~\ref{sec:bg-dbc} either determines $u(K)$ or raises its lower
bound to $3$. Alternating knots are left out: for them $\dbc(K)$ is an $L$-space whose correction terms are read off
from the Goeritz matrix of a reduced alternating diagram \cite{OS05}, and they were treated in \cite{Lee}.

The obstructions require the correction terms of $\dbc(K)$, and Theorem~\ref{thm:lattice} provides them when $\dbc(K)$
or $-\dbc(K)$ bounds a negative-definite plumbing tree with at most two bad vertices. Our computation places the double
branched covers of these knots in three classes.
\begin{enumerate}
\item $\dbc(K)$ has positive simplicial volume: it is hyperbolic for \nHypCert{} knots, and toroidal with a hyperbolic
piece in its JSJ decomposition for \nMixed{} knots.
\item $\dbc(K)$ is a graph manifold, and neither $\dbc(K)$ nor $-\dbc(K)$ bounds a negative-definite plumbing tree
(\nNoDef{} knots).
\item $\dbc(K)$ or $-\dbc(K)$ bounds a negative-definite plumbing tree (\nNegDef{} knots).
\end{enumerate}
In classes~(1) and~(2) no orientation of $\dbc(K)$ bounds a negative-definite plumbing tree; for class~(2) this is a
property of the manifold rather than a failure to find a plumbing (Corollary~\ref{cor:negdef}). Hence, for all these
knots, the lattice formula of Theorem~\ref{thm:lattice} is unavailable, and the correction terms of $\dbc(K)$ must be
computed by other means. The
methods of this section apply only to the \nNegDef{} knots of class~(3). Table~\ref{tab:map} lists the classes by
previous range, together with the outcome of the tests of Sections~\ref{sec:u1} and~\ref{sec:u2} for class~(3).

\begin{table}[htbp]\centering
\caption{Double branched covers of the \nOpenAll{}
non-alternating knots of Section~\ref{sec:geometry}, by previous range \cite{Lee}. In the column $u\in[2,3],[2,4]$,
$u=2$ is excluded for \kn{13n}{2933} and \kn{13n}{2956} (Table~\ref{tab:new-u3}) and for \kn{13n}{3004} and \kn{13n}{3005}
(Table~\ref{tab:u3-66}).}\label{tab:map}
\begin{tabular}{lrr}
\toprule
$\dbc(K)$ & $u\in[1,2]$ & $u\in[2,3],[2,4]$\\
\midrule
hyperbolic (verified) & 187 & 303\\
toroidal, with a hyperbolic JSJ piece & 69 & 80\\
graph manifold, no negative-definite plumbing tree: & & \\
\quad recognised by Regina & 102 & 197\\
\quad from an algebraic diagram & 20 & 12\\
$\pm\dbc(K)$ bounds a negative-definite plumbing tree: & & \\
\quad $u=1$, resp.\ $u=2$, excluded & 41 & 4\\
\quad obstructions inconclusive & 18 & 101\\
\quad no correction-term test ($\sigma=0$) & 0 & 16\\
\midrule
total & 437 & 713\\
\bottomrule
\end{tabular}%
\end{table}

We obtained this classification as follows. We recognise $\dbc(K)$ as in Section~\ref{sec:bg-plumbing}. For the knots
whose cover is not recognised as a graph manifold, we try to certify hyperbolicity with the verified computations of
SnapPy \cite{SnapPy,HIKMOT} in Sage \cite{Sage}. The triangulation of the cover itself rarely admits a geometric
solution, so we drill a short closed geodesic and refill along its meridian, which gives another triangulation of
$\dbc(K)$, and verify a positively oriented solution of the gluing and Dehn filling equations by interval arithmetic.
This proves that $\dbc(K)$ is hyperbolic for \nHypCert{} knots. For the remaining knots we look for incompressible tori
among the vertex normal surfaces of a triangulation in quadrilateral coordinates, using Regina, and cut along them. If
one of the pieces is certified hyperbolic in the same way, then $\dbc(K)$ has positive simplicial volume: by Gromov's
additivity theorem for gluings along incompressible tori, its simplicial volume is at least that of the hyperbolic
piece, which is positive \cite{BBFIPP}. This happens for \nMixed{} knots. On the other hand, the
simplicial volume of a closed three-manifold is the sum of those of its hyperbolic pieces \cite{Soma} (see
\cite{BBFIPP}), so plumbed manifolds have simplicial volume zero. Hence in class~(1) no orientation of $\dbc(K)$
bounds a plumbing tree at all. For the graph manifolds that Regina recognises, the computation of
Section~\ref{sec:bg-plumbing} and Corollary~\ref{cor:negdef} place the knot in class~(2) or class~(3).

For the remaining \nGraphFour{} knots, an incompressible torus was found, but neither the cover nor its pieces were
recognised, and no piece was certified hyperbolic. The KnotInfo diagrams of these knots are not algebraic, since merging
stops at six vertices, but random Reidemeister moves turn each of them into an algebraic diagram with \nAlgCrossMin{}
to \nAlgCrossMax{} crossings. By Lemma~\ref{lem:algebraic}, these diagrams give the normal forms of $\dbc(K)$ and
$-\dbc(K)$. Both have four nodes, so $\dbc(K)$ is a graph manifold with four Seifert pieces, more than Regina's
recognition handles, and the intersection matrices of both have positive eigenvalues. By Corollary~\ref{cor:negdef},
these knots belong to class~(2). In the notation of Proposition~\ref{prop:classes},
$\mathcal P$ is class~(3), and $\mathcal N$ is the union of classes~(1) and~(2).

From now on, let $K$ be one of the \nNegDef{} knots of class~(3); \nNegDefOne{} of them have
$u(K)\in[1,2]$ and \nNegDefTwo{} have $u(K)\in[2,3]$ or $[2,4]$. We write $D=\det K$, $\Gamma$ for the
negative-definite normal form, and $\epsilon=\pm1$ for the sign with $Y_\Gamma\cong\epsilon\,\dbc(K)$. The tree
$\Gamma$ has one bad vertex for \nNegDefBadOne{} of these knots and two for the other \nNegDefBadTwo{}; this completes
the proof of Proposition~\ref{prop:classes}. All correction terms are therefore computed from $\Gamma$ by
Theorem~\ref{thm:lattice}. The maximum there is computed exactly: we enumerate the
characteristic covectors in an ellipsoid that contains a representative of every class, and every covector outside it
has smaller $c^2$ than these representatives.

\subsection{Unknotting number one}\label{sec:u1}
Suppose that $u(K)\in[1,2]$. If $u(K)=1$, then $H_1(\dbc(K))$ is cyclic and $\epsilon'\dbc(K)\cong S^3_{D/2}(C)$ for a
sign $\epsilon'$ and a knot $C$ (Section~\ref{sec:bg-dbc}). We therefore test, for both signs $\epsilon'$ and for every
affine identification of $\mathrm{Spin}^c(\dbc(K))$ with $\bbz/D$ that is compatible with the linking pairing, whether
the correction terms of $\epsilon'\dbc(K)$ have the form \eqref{eq:niwu} for some sequence $V_j$. This is the test
used for Montesinos knots in \cite{Lee}; its sign conventions are calibrated on the lens spaces
$L(p,2)=S^3_{p/2}(U)$. If no choice survives, then $u(K)\ne1$; this happens for \nObstrD{} knots. If some choice
survives and $\Gamma$ has one bad vertex, we compute the ranks of $HF_{\mathrm{red}}(Y_\Gamma,\mathfrak t)$ from the
graded root of $\Gamma$ and test \eqref{eq:cone} for the surviving choices; this excludes $u(K)=1$ for \nObstrHF{}
further knots. For example, for $K=\kn{13n}{2066}$ we have $D=7$, and $\Gamma$ has one bad vertex. The correction
terms of $\pm\dbc(K)$ have the form \eqref{eq:niwu} for exactly two choices, both with $V_j=0$ for all $j$, so that
$T_i=0$. Under both, $HF_{\mathrm{red}}$ has rank one at the labels $3$ and $5$ and vanishes at the other labels, so
\eqref{eq:cone} would give $c_1+c_2=0$, $c_2+c_3=1$ and $c_3+c_4=0$, which is impossible. Since $u(K)\le2$ is known for these knots \cite{Lee}, they have $u(K)=2$, which proves
Theorem~\ref{thm:main}\eqref{it:plumbing} for the \nNewUtwo{} knots of Table~\ref{tab:new-u2}. For the other knots of
class~(3) with $u(K)\in[1,2]$, some identification survives.

\subsection{Unknotting number two}\label{sec:u2}
Suppose that $u(K)\in[2,3]$ or $[2,4]$, and replace $K$ by its mirror if necessary so that $\sigma(K)\ge0$. By
Section~\ref{sec:bg-dbc}, $u(K)=2$ requires the pattern $(0,2)$ if $\sigma(K)=4$, and one of the patterns $(1,1)$ and
$(0,2)$ if $\sigma(K)=2$; if $\sigma(K)=0$ we have no test, which is the case for \nNoTest{} knots. For the patterns
with $n=\sigma(K)/2$ we apply Owens's test. We enumerate the matrices $Q$ of Section~\ref{sec:bg-dbc} in the reduced
form $0\le a<m_1\le m_2$ of \cite{Lee}, compute the minima of $\widetilde Q$ exactly in every class, and compare
them with the correction terms of $Y_\Gamma$ and of $-Y_\Gamma$ under every isomorphism of the groups. A pattern is
regarded as excluded only if no matrix fits in either orientation, so the conclusion does not depend on $\epsilon$.
The pattern $(0,2)$ for $\sigma(K)=2$ is excluded when $d_K=2$ and $\varepsilon_K=-1$.

Both patterns are excluded for \kn{13n}{2933} and \kn{13n}{2956}. These knots have $|\sigma(K)|=2$, and Regina gives
the same name, $\mathrm{SFS}\,[S^2:(3,1)(3,1)(3,2)(5,-3)]$, to their double branched covers, with
$H_1\cong\bbz/3\oplus\bbz/33$; Owens's test therefore runs over the isomorphisms of a non-cyclic group. For both
knots $d_K=2$ and $\varepsilon_K=-1$, which excludes $(0,2)$, and no matrix fits the pattern $(1,1)$ in either
orientation. Since $u(K)\le3$ is known \cite{Lee}, $u(K)=3$, which proves Theorem~\ref{thm:main}\eqref{it:plumbing}
for Table~\ref{tab:new-u3}.

The test also excludes $u(K)=2$ for two knots of Table~\ref{tab:u3-66}, \kn{13n}{3004} and \kn{13n}{3005}. They have
$|\sigma(K)|=4$, so only the pattern $(0,2)$ is possible, and no matrix fits it in either orientation; here
$H_1(\dbc(K))$ is $\bbz/3\oplus\bbz/15$ and $\bbz/3\oplus\bbz/51$, respectively. Together with $u(K)\le3$, this gives a
second proof of $u(K)=3$ for these two knots. For the other knots of class~(3) with $\sigma(K)\ne0$, at least one
pattern survives.

\subsection{Verification}\label{sec:verify}
We checked the classification of Section~\ref{sec:geometry} and the results of Sections~\ref{sec:u1}
and~\ref{sec:u2} in four ways: by a second identification of the normal forms, by the Casson--Walker invariant, by
separately written implementations of the correction-term obstructions, and on control knots.

Each normal form $\Gamma$ obtained from Regina's data is identified with $\pm\dbc(K)$ in two ways
(Section~\ref{sec:bg-plumbing}): by the Regina name that produced it, and by triangulating $Y_\Gamma$ in SnapPy as
surgery on the framed link of Figure~\ref{fig:plumbing}(a) and recognising the triangulation with Regina. The second
identification uses neither the triangulation of $\dbc(K)$ nor the conventions relating plumbing graphs to Regina's
names. It gives the same manifold for both normal forms of each of the \nNoDefRec{} knots of class~(2) whose cover
Regina recognises, and for the negative-definite normal form of each of the \nNegDef{} knots of class~(3). Of the normal
forms of the \nIdentified{} knots of Tables~\ref{tab:new-u2} and~\ref{tab:new-u3}, \nBadOne{} have one bad vertex and
\nBadTwo{} have two; those of \kn{13n}{3004} and \kn{13n}{3005} have one each.

The normal forms of the other \nGraphFour{} knots of class~(2) come from Lemma~\ref{lem:algebraic}. Their algebraic
diagrams are obtained from the KnotInfo diagrams by Reidemeister moves, and SnapPy also finds their exteriors isometric
to those of the KnotInfo diagrams. For each of these knots, a second search with a different random seed gives a second
algebraic diagram, which differs from the first in its number of crossings or in the sizes of its faces, with the same
two normal forms, and $\operatorname{coker}Q_\Gamma$ is cyclic of order $\det K$ for both. We also applied the
construction of Lemma~\ref{lem:algebraic} to the KnotInfo diagrams that are algebraic; there are \nAlgCheck{} of
them among the knots with at most thirteen crossings, $|\det Q_{\Gamma_D}|=\det K$ for all of them, and every
reduced tree is in normal form. For the \nAlgTrees{} of these knots that lie in classes~(2) and~(3), the construction gives the normal forms
identified by triangulation: both of them in class~(2), and the negative-definite one in class~(3). These knots include
\nAlgThm{} of the \nAdopted{} knots of Tables~\ref{tab:new-u2} and~\ref{tab:new-u3}.

The sign $\epsilon$ is fixed by comparing the linking pairing of $\operatorname{coker}Q_\Gamma$ with that of
$\dbc(K)$, unless the pairing is isometric to its negative. Since the tests of Sections~\ref{sec:u1}
and~\ref{sec:u2} are applied to both orientations, their conclusions do not depend on $\epsilon$; the sign is used
only in the following check. As a check that uses no Floer homology, we compare
Casson--Walker invariants $\lambda$, normalised so that $\lambda=-1$ on the boundary of the negative-definite $E_8$
plumbing, as in \cite{Lee}. In this normalisation Mullins's formula \cite{Mullins} gives
\[
    R(K):=-\frac{D\sigma(K)}4+\frac{D\,V_K'(-1)}{6\,V_K(-1)}=-2D\,\lambda(\dbc(K));
\]
see \cite{GreeneWatson}, whose normalisation is twice ours. The formula for plumbed manifolds in
\cite{Nemethi} gives
\[
    L(\Gamma):=\frac{|\det Q_\Gamma|}{12}\Bigl(\sum_ve_v+3|V|+\sum_v(2-\delta_v)\,(Q_\Gamma^{-1})_{vv}\Bigr)
    =-2|H_1(Y_\Gamma)|\,\lambda(Y_\Gamma);
\]
for example, $L(\Gamma)=2$ for the $E_8$ plumbing and $L(\Gamma)=-1/6$ for a single vertex of weight $-3$, in
agreement with $-2|H_1|\lambda=\sum_{\mathfrak t}d(Y_\Gamma,\mathfrak t)$ for these $L$-spaces \cite{Lee}.
Since $\lambda(-Y)=-\lambda(Y)$, the identification forces $\epsilon L(\Gamma)=R(K)$. This identity holds exactly for
all \nAdopted{} knots of Tables~\ref{tab:new-u2} and~\ref{tab:new-u3} and for \kn{13n}{3004} and \kn{13n}{3005}, with the sign given by the linking pairing whenever the pairing determines one.

The correction-term obstructions were recomputed by separately written programs, which share with the first ones only
the correction terms of $Y_\Gamma$. The second implementation of the test of Section~\ref{sec:u1} allows every affine
identification, not only those compatible with the linking pairing, and leaves no choice for any of the \nObstrD{} knots excluded by correction
terms. For the two knots of Table~\ref{tab:new-u3} and for \kn{13n}{3004} and \kn{13n}{3005}, the second
implementation of Owens's test enumerates a larger, unreduced family of matrices $Q$, computes the minima of
$\widetilde Q$ with a separate exact routine, and runs over all isomorphisms of the groups; no matrix fits in either
orientation. The invariants $\varepsilon_K$ and $d_K$ were recomputed from $V_K(e^{i\pi/3})$ in exact
cyclotomic arithmetic. The exclusions by reduced Floer homology were not recomputed separately; for each of the
\nObstrHF{} knots, the records list the ranks of $HF_{\mathrm{red}}(Y_\Gamma)$, every identification that survives the
correction-term test, and the condition of \eqref{eq:cone} that fails, as in the example of Section~\ref{sec:u1}.

As controls, we applied Owens's test to the \nCtrl{} Montesinos knots with at most twelve crossings, $|\sigma(K)|\in\{2,4\}$ and
$u(K)\in\{2,3\}$, using the plumbings given by their Montesinos notation. It obstructs none of the \nCtrlFourTwo{}
knots with $|\sigma(K)|=4$ and $u(K)=2$. Of the \nCtrlFourThree{} knots with $|\sigma(K)|=4$ and $u(K)=3$, it
obstructs \nCtrlFourThreeBoth{} in both orientations and \nCtrlFourThreeExp{} more in the expected orientation only.
Among the \nCtrlAlt{} alternating knots it obstructs \nCtrlAltBoth{}, all of which are also obstructed by the
Goeritz-form test of \cite{Lee}; that test obstructs \nCtrlAltGoeritz{}.

\section{Knots with \texorpdfstring{$u=u_a=3$}{u = ua = 3} and special alternating knots}\label{sec:66}

This section proves Theorem~\ref{thm:main}\eqref{it:rank2} and~\eqref{it:u5}. Neither argument needs a plumbing. For
the knots of Table~\ref{tab:u3-66} we use the linking pairing of $\dbc(K)$, and for the special alternating knots
the theorem of McCoy and Park, which reduces the lower bound to a finite check of one diagram.

\subsection{Rank-two linking pairings}\label{sec:rank2}

The knots of Table~\ref{tab:u3-66} have $u(K)\in[2,3]$ by \cite{Lee}. If $K$ can be turned into a knot with
Alexander polynomial one by at most one crossing change, then Theorem~\ref{thm:bf} presents $2\,l(K)$ by a matrix of
size at most one, so $H_1(\dbc(K))$ is cyclic. Table~\ref{tab:u3-66} shows that it is not, so $u_a(K)\ge2$. To exclude
two changes, we use the following consequence of Theorem~\ref{thm:bf}.

\begin{lemma}\label{lem:rank2}
Suppose that $K$ can be turned into a knot with Alexander polynomial one by two crossing changes. Then there are
$e_1,e_2\in\{\pm1\}$ and an integral symmetric matrix $C=\left(\begin{smallmatrix}a&c\\c&b\end{smallmatrix}\right)$
with the following properties.
\begin{enumerate}
\item $|\det C|=\det K$ and $\operatorname{sign}C=\sigma(K)+e_1+e_2$.
\item There is $P\in GL(2,\bbz)$ such that $PCP^T$ has diagonal entries congruent to $e_1$ and $e_2$ modulo four and
an even off-diagonal entry.
\item There are $v_1,v_2\in H_1(\dbc(K))$ with $2\,l(K)(v_i,v_j)\equiv(C^{-1})_{ij}\pmod\bbz$ for $i,j\in\{1,2\}$.
\item Either $0<|a|\le|b|\le\det K$ and $0\le c\le|a|/2$, or $b=0$, $\det C=-c^2$ and $-c\le a\le c$.
\end{enumerate}
\end{lemma}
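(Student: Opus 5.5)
Apply Theorem~\ref{thm:bf} with $k=2$. It gives a hermitian $2\times2$ matrix $A(t)$ presenting $\lambda(K)$ with $A(1)=\operatorname{diag}(e_1,e_2)$, $e_i\in\{\pm1\}$. Put $C_0=A(-1)$. Then $|\det C_0|=\det K$, $\operatorname{sign}C_0=\sigma(K)+e_1+e_2$ and $l(C_0)\cong 2\,l(K)$. By the remark after Theorem~\ref{thm:bf}, the diagonal entries of $C_0$ are congruent to $e_1,e_2$ modulo four and its off-diagonal entry is even. The plan is to take $C=PC_0P^T$ for a suitable $P\in GL(2,\bbz)$ that brings $C_0$ into the normal form of~(4). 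Condition~(2) then holds with $P^{-1}$, since $P^{-1}C(P^{-1})^T=C_0$.

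Conditions (1) and (3) are invariant under congruence. The determinant is preserved up to sign and the signature is preserved, so (1) holds for $C$. For (3), the isometry $l(C)\cong 2\,l(K)$ sends the standard basis classes $[\mathbf e_1],[\mathbf e_2]$ of $\bbz^2/C\bbz^2$ to elements $v_1,v_2\in H_1(\dbc(K))$. By the definition of $l(C)$ we get $2\,l(K)(v_i,v_j)=\mathbf e_i^TC^{-1}\mathbf e_j=(C^{-1})_{ij}\bmod\bbz$. Since $\det C$ is odd, $C$ is nondegenerate and these make sense.

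The work is in the reduction (4), which is Lagrange--Gauss reduction of binary quadratic forms, including indefinite ones. First suppose $C_0$ represents $0$ primitively, i.e.\ $x^TC_0x=0$ for a primitive $x$; this happens exactly when $-\det C_0$ is a perfect square. Complete $x$ to a basis with $x$ as the \emph{second} vector, so that $b=0$ and $\det C=-c^2$. Replace the first vector $y$ by $y+kx$ for some integer $k$; this changes $a$ by $2kc$, so $a$ can be brought into $[-c,c]$. Replacing $x$ by $-x$ changes the sign of $c$, so we may take $c\ge0$.

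Otherwise $C_0$ does not represent $0$. Choose a basis whose first vector $x$ minimises $|x^TC_0x|>0$ among nonzero vectors, and set $a=x^TC_0x$. Then the substitution $y\mapsto y+kx$ changes $c$ by $ka$, so we can reach $|c|\le|a|/2$, and the sign change $y\mapsto -y$ gives $0\le c$. Minimality of $|a|$ gives $|a|\le|b|$. For the bound $|b|\le\det K$: in the definite case the inequality $|ab|\le\frac43|\det C|$ together with $|a|\le|b|$ gives $|a|^2\le\frac43\det K$, and then $|b|=|\det C+c^2|/|a|$ is at most $\det K$ when $|a|\ge1$, after checking the small cases. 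The indefinite case $ab<0$ is easier, since then $|ab|=|\det C|-c^2\le\det K$ and so $|b|\le\det K$.

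The main obstacle is the indefinite non-isotropic case with $ab>0$. There minimising $|x^TC_0x|$ still gives $|a|\le|b|$, but we need $c^2\le|a|^2/4$ and a bound on $|ab|$ together. The identity $ab=\det C+c^2$ yields $|ab|\le|\det C|+a^2/4$. Combined with $|a|\le|b|$ this gives $\frac34a^2\le\det K$, hence $|b|\le(\det K+a^2/4)/|a|\le\det K$ for $|a|\ge1$. This case needs to be checked carefully, since the minimal value might only be attained in a way that conflicts with the half-reduction.
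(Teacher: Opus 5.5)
Your proof is correct, and its skeleton is the paper's. You take $C'=A(-1)$ from Theorem~\ref{thm:bf}, note that (1)--(3) hold for $C'$ (with $P=I$ in (2)) and survive congruence, and replace $C'$ by a congruent matrix in the form (4). The only difference is in the last step. The paper cites \cite[Lem.~5.2]{BF} for the normal form and has to translate its second case ($c^2=d$) into $\det C=-c^2$. You instead prove the reduction directly by Lagrange--Gauss reduction, which makes the lemma self-contained and independent of the conventions in Borodzik and Friedl's statement.

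The worry in your last paragraph is unfounded, and your own argument already settles it. Minimality is a property of the first vector $x$ alone, and the moves $y\mapsto y+kx$ and $y\mapsto -y$ leave $x$ fixed, so $|b|\ge|a|$ holds for every choice of $y$. Moreover, the inequality $|ab|\le|\det C|+c^2\le\det K+a^2/4\le\det K+|ab|/4$ holds in every case by the triangle inequality, so $|ab|\le\frac43\det K$ uniformly. Then $|a|=1$ forces $c=0$ and $|b|=\det K$, while $|a|\ge2$ gives $|b|\le\frac23\det K$. In fact the case $ab>0$, $\det C<0$ cannot occur after reduction at all, since it would give $a^2\le ab=c^2-\det K<a^2/4$.
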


\begin{proof}
Let $A(t)$ be the matrix of Theorem~\ref{thm:bf}, with $A(1)=\operatorname{diag}(e_1,e_2)$, and let $C'=A(-1)$.
Theorem~\ref{thm:bf} gives (1) and $l(C')\cong2\,l(K)$, and the remark after it gives (2) for $C'$ with $P=I$. By
\cite[Lem.~5.2]{BF}, $C'$ is congruent to a matrix $C$ as in (4). In the second case, that lemma states the
condition as $c^2=d$, where $d$ is the determinant; since
$\det\left(\begin{smallmatrix}a&c\\c&0\end{smallmatrix}\right)=-c^2$, it reads $c^2=-\det C$. Congruence
preserves (1), preserves (2) after changing $P$, and preserves the isometry class of the linking pairing, and an
isometry $l(C)\cong2\,l(K)$ gives elements $v_1,v_2$ as in (3) \cite{BF}.
\end{proof}

The conditions of Lemma~\ref{lem:rank2} can be checked by a finite computation. For both signs of $\det C$ we list the
matrices of (4). For each of them and each pair $(e_1,e_2)$ we test (1), and we test (2) over the $96$ elements of
$GL(2,\bbz/4)$. This is exactly condition (2), since the residues in (2) depend only on $P$ modulo four and every
element of $GL(2,\bbz/4)$ lifts to $GL(2,\bbz)$: as $\bbz/4$ is a local ring with units $\pm1$, the group
$GL(2,\bbz/4)$ is generated by elementary matrices and $\operatorname{diag}(-1,1)$. We test (3) by listing the elements of
$H_1(\dbc(K))=\operatorname{coker}(V+V^T)$ with the pairing $l(V+V^T)$. We do not require $v_1$ and $v_2$ to generate
$H_1(\dbc(K))$, which can only weaken the test.

For every knot of Table~\ref{tab:u3-66}, no matrix satisfies Lemma~\ref{lem:rank2}, so $u_a(K)\ge3$. For
\nUthreeNoCond{} of these knots, including all the alternating ones, no matrix with $|\det C|=\det K$ satisfies (3)
at all. The remaining \nUthreeSigned{} knots have $|\sigma(K)|=4$, so (1) forces $e_1=e_2=-\sigma(K)/4$, and no matrix
satisfies (2) and (3) with these signs.

Conversely, each knot of Table~\ref{tab:u3-66} can be unknotted by changing three crossings of its minimal diagram in
KnotInfo. For each of these sets of three crossings, SnapPy simplifies the fundamental group of the complement of the
changed knot to $\bbz$, so the changed knot is the unknot by Dehn's lemma \cite{Papakyriakopoulos}; in agreement with
this, its knot Floer homology $\widehat{HFK}$, computed with $\mathbb F_2$ coefficients, has total rank one. Hence
$u_a(K)\le u(K)\le3$, and therefore $u(K)=u_a(K)=3$. This proves Theorem~\ref{thm:main}\eqref{it:rank2}.

Borodzik and Friedl used this obstruction, with definiteness conditions when $|\sigma(K)|=4$, to show $u(K)\ge3$ for
\nBFPubCtrl{} knots with at most twelve crossings \cite{BF}. We applied Lemma~\ref{lem:rank2}
to the \nBFTested{} knots with at most thirteen crossings whose unknotting number is recorded in KnotInfo as one of
$[1,3]$, $[1,4]$, $[2,3]$, $[2,4]$ and $[2,5]$. It shows for \nBFObstructed{} of them that two crossing changes cannot
produce a knot with Alexander polynomial one: the knots of Table~\ref{tab:u3-66}, and \nBFKnownThree{} knots for which
$u(K)\ge3$ was known \cite{Lee}. As controls, the same implementation excludes none of the \nBFNegCtrl{} knots with
$u(K)\le2$ by \cite{Lee} or with $u_a(K)\le2$ in KnotInfo, and it excludes all
\nBFPubCtrl{} knots of Borodzik and Friedl and their mirrors. The computation for Table~\ref{tab:u3-66} was repeated with Seifert matrices
computed from the PD codes, with the same result. For the alternating knots it was also repeated, for the knots and
their mirrors, with a second implementation written independently, which uses integer Hermite normal forms and exact
rational arithmetic.

Among the knots of Table~\ref{tab:u3-66}, \nUthreeSigmaZero{} have $\sigma(K)=0$. For them the correction-term
arguments of Section~\ref{sec:bg-dbc} do not apply to the pattern $(1,1)$, so even when $\dbc(K)$ bounds a
negative-definite plumbing, Section~\ref{sec:new} gives no test of $u(K)=2$. Lemma~\ref{lem:rank2} treats all sign
patterns at once. It also does not need a plumbing: by Section~\ref{sec:geometry}, for \nThmCNoPlumb{} of the
\nUthreeNonalt{} non-alternating knots of Table~\ref{tab:u3-66} neither $\dbc(K)$ nor $-\dbc(K)$ bounds a
negative-definite plumbing tree, and the test of Section~\ref{sec:u2} applies only to \kn{13n}{3004} and
\kn{13n}{3005}, where it also excludes $u(K)=2$.

\subsection{Special alternating knots with \texorpdfstring{$u=5$}{u = 5}}\label{sec:u5}

Let $K$ be one of \kn{13a}{4770}, \kn{13a}{4787} and \kn{13a}{4801}. These knots have $\sigma(K)=-8$, and
$u(K)\in[4,5]$ by \cite{Lee}. The reduced alternating diagram of $K$ in KnotInfo has thirteen crossings, all of the
same sign. For such a diagram one of the two checkerboard surfaces is a Seifert surface \cite{McCoyPark}, so $K$
is special alternating, and Theorem~\ref{thm:mccoy-park} shows that $u(K)=4$ or $c_4(K)=4$ only if $K$ can be
unknotted by changing four crossings of this diagram.

We checked all sets of at most four of the thirteen crossings. For each of them, either the changed diagram has
determinant different from one, or its knot Floer homology has total rank greater than one; in neither case is it the
unknot. Hence $u(K)\ne4$ and $c_4(K)\ne4$. Since $u(K)\ge c_4(K)\ge|\sigma(K)|/2=4$ \cite{McCoyPark}, it follows
that $u(K)\ge5$ and $c_4(K)\ge5$. On the other hand, changing a suitable set of five crossings of the diagram gives the unknot: SnapPy simplifies the fundamental group of the
complement of the changed knot to $\bbz$ \cite{Papakyriakopoulos}, and its knot Floer homology has total rank one. Therefore
$u(K)=5$, and since $c_4(K)\le u(K)$, also $c_4(K)=5$. This proves Theorem~\ref{thm:main}\eqref{it:u5}. McCoy and Park
determined unknotting numbers of special alternating knots with eleven and twelve crossings in the same way
\cite{McCoyPark}; the three knots above have thirteen.

\section{A further explicit counterexample to the Bernhard--Jablan conjecture}\label{sec:13n1587}

We use the notation $N_{\min}(K)$ and $u^s_{\mathrm{BJ}}(K)$ of Section~\ref{sec:intro}. Brittenham and Hermiller
showed that the Bernhard--Jablan conjecture fails for at least one of \kn{12n}{288}, \kn{12n}{491}, \kn{12n}{501} and \kn{13n}{3370} \cite{BH1705}, and the computations of
\cite{Lee} show that it fails for \kn{13n}{3370}. For $K=\kn{13n}{1587}$, Brittenham and Hermiller proved $u(K)\le2$
and showed that the corresponding equality for the weak Bernhard--Jablan number, in which every change of an
unknotting sequence is made in a minimal diagram of the current knot, fails \cite{BH1705}. The value $u(K)=2$
was determined in \cite{Lee}, but the equality $u(K)=u^s_{\mathrm{BJ}}(K)$ itself remained open. Theorem~\ref{thm:bj}
shows that $u^s_{\mathrm{BJ}}(K)=3>2=u(K)$, so \kn{13n}{1587} is a further explicit counterexample to the
conjecture, besides \kn{13n}{3370}.

\begin{table}[htbp]
\centering
\caption{The knots obtained by changing one crossing in a minimal diagram of \kn{13n}{1587}, and the reason why $u\ge2$. For each knot, \emph{Changes} is the number of crossing changes that give it, among the \nNbrChanges{} changes of one crossing in the \nMinDiag{} minimal diagrams obtained from reference diagrams in Section~\ref{sec:13n1587}. Knots are named up to mirror image.}\label{tab:nbr}
\begin{tabular}{lrrll}
\toprule
Knot & Changes & $|\sigma|$ & $H_1(\dbc(K))$ & $u\ge2$ by\\
\midrule
$10_{38}$ & 29 & 2 & $\bbz/59$ & no unknotting crossing \cite{McCoy}\\
\kn{11n}{99} & 14 & 2 & $\bbz/39$ & \cite{GordonLuecke06}\\
$9_{46}$ & 14 & 0 & $\bbz/3 \oplus \bbz/3$ & $H_1$ needs two generators \cite{Wendt}\\
$5_2\#3_1$ & 12 & 4 & $\bbz/21$ & $|\sigma|=4$\\
\kn{12n}{155} & 8 & 4 & $\bbz/89$ & $|\sigma|=4$\\
\kn{12n}{563} & 8 & 2 & $\bbz/71$ & \cite{Lee}\\
\kn{11n}{66} & 5 & 2 & $\bbz/75$ & \cite{GordonLuecke06}\\
\kn{12a}{237} & 1 & 4 & $\bbz/161$ & $|\sigma|=4$\\
\midrule
total & 91 & & & \\
\bottomrule
\end{tabular}
\end{table}

\begin{proof}[Proof of Theorem~\ref{thm:bj}]
We first show that $u(K)=2$. The double branched cover of $K$ is an $L$-space, and its correction terms, computed
with Greene's spanning tree model, exclude $u(K)=1$ \cite{Lee}. For the upper bound, Brittenham and Hermiller
give braid representatives of $K$ and of $10_{113}$ that differ in one crossing \cite{BH1705}, and $10_{113}$
has an unknotting crossing. We also found a diagram of $K$ with fifteen crossings in which changing two crossings gives
the unknot: the changed diagram simplifies to a diagram without crossings, SnapPy simplifies the fundamental group of
its complement to $\bbz$, and its knot Floer homology has total rank one. In both arguments the diagrams were identified with the
named knots by combinatorial isomorphisms between triangulations of their complements, found with SnapPy's canonical
retriangulation; this suffices by the knot complement theorem \cite{GordonLuecke89}. Hence $u(K)=2$.

Next we determine $N_{\min}(K)$. By \cite[Prop.~2.3]{BH1705}, $N_{\min}(K)$ consists of the knots obtained by changing one
crossing in those diagrams of $K$ that arise from reference diagrams by crossing changes, where the reference diagrams
are one reduced alternating diagram for each of the \nRefAlt{} alternating knots with thirteen crossings. Since
changing all crossings of a diagram gives a diagram of the mirror image, we consider only the crossing changes that
leave a fixed crossing of the reference diagram unchanged; up to mirror image, these give all the diagrams in question.
We computed the determinant of each of them exactly; \nDetCand{} have determinant $\det K=75$. Of these, \nExclCross{}
simplify to diagrams with fewer than thirteen crossings, \nExclAlex{} have a different Alexander polynomial, and
\nExclCover{} have the same Alexander polynomial and knot Floer homology as $K$, up to mirror image, but are
distinguished from $K$ by the multiset of first homology groups of the connected $d$-fold covers of their complements,
for some $d\le4$. The remaining \nMinDiag{} diagrams represent $K$ or its mirror image. They need not be all the minimal
diagrams of $K$, but by the description of $N_{\min}(K)$ above they suffice to determine it: up to mirror image,
$N_{\min}(K)$ is the set of knots obtained by changing one crossing in one of them.

Finally, changing one crossing in each of these \nMinDiag{} diagrams gives \nNbrChanges{} diagrams, which represent
the \nNbrTypes{} knots of Table~\ref{tab:nbr} up to mirror image; since the unknotting number is invariant under mirror
image, it suffices to bound the unknotting numbers of these knots. For $9_{46}$, $10_{38}$, \kn{12a}{237}, \kn{12n}{155}
and the composite knot, the bound $u\ge2$ follows from the diagram itself: from $H_1$ of the double branched cover
\cite{Wendt}, from the signature, or, for $10_{38}$, because the diagram simplifies to a reduced alternating diagram
with ten crossings in which no crossing change gives the unknot, so that McCoy's theorem applies \cite{McCoy}.
For the composite knot, whose summands $5_2$ and $3_1$ have signatures of the same sign, $u\ge2$ also follows from
Scharlemann's theorem that knots with unknotting number one are prime \cite{Scharlemann}.
The other three knots were identified by isomorphisms of triangulations found independently with SnapPy and with
Regina. For \kn{11n}{66} and \kn{11n}{99}, Gordon and Luecke showed $u\ge2$ \cite{GordonLuecke06}; we also
excluded $u=1$ for them with correction terms from Greene's model \cite{Greene} and the Ni--Wu formula, as in
\cite{Lee}. For \kn{12n}{563}, $u=2$ by \cite{Lee}. Hence every knot in $N_{\min}(K)$ has unknotting
number at least two, and $u^s_{\mathrm{BJ}}(K)=3$.
\end{proof}

\begin{remark}
The proof uses the lower bound $u(\kn{12n}{563})\ge2$ of \cite{Lee}, but the existence of a counterexample among
\kn{13n}{1587} and \kn{12n}{563} does not depend on it. The same computation, with the \nTwelveRef{} alternating
knots with twelve crossings as references, gives \nTwelveDiag{} diagrams of \kn{12n}{563} up to mirror image, and none
of their \nTwelveChanges{} crossing changes gives the unknot, since every changed diagram has determinant different
from one. Hence, if $u(\kn{12n}{563})=1$, then \kn{12n}{563} itself is a counterexample; otherwise the proof above
shows that \kn{13n}{1587} is one.
\end{remark}

\begin{samepage}
\subsection*{Data and code availability}
The code and the computational records of this paper are available in the following repository:
\begin{center}
\coderepo
\end{center}
It contains the scripts that recognise the double branched covers, compute their Neumann normal forms and evaluate the obstructions of
Section~\ref{sec:new}, the rank-two linking pairing test of Section~\ref{sec:rank2}, the checks of
Sections~\ref{sec:u5} and~\ref{sec:13n1587}, and the script that generates the tables and counts of this paper from
the deposited records.

Parts of the code were written with the assistance of Claude Opus 5.5 (Anthropic). The author takes full
responsibility for the code, the mathematical content and the results of this paper.
\end{samepage}

\subsection*{Acknowledgements}
The author is grateful to Pavel Putrov for helpful discussions and valuable comments on an earlier draft of this
paper.

This work is supported by the Institute for Basic Science (IBS) under Project No. IBS-R003-D1.

\appendix
\section{Tables}
\begin{longtable}{lrrllr}
\caption{New exact values $u(K)=2$ (previous range $[1,2]$). $|V|$ is the number of vertices of the Neumann normal form.}\label{tab:new-u2}\\
\toprule
Knot & $\det$ & $\sigma$ & $\dbc(K)$ (Regina) & Obstruction & $|V|$\\
\midrule
\endfirsthead
\multicolumn{6}{@{}l@{}}{\emph{New exact values $u(K)=2$, continued from the previous page.}}\\[2pt]
\toprule
Knot & $\det$ & $\sigma$ & $\dbc(K)$ (Regina) & Obstruction & $|V|$\\
\midrule
\endhead
\bottomrule
\multicolumn{6}{@{}r@{}}{\emph{continued on the next page}}\\
\endfoot
\bottomrule
\endlastfoot
\kn{12n}{51} & 9 & 0 & {\footnotesize [D: (2,1) (5,3)] $\cup$ [D: (3,1) (3,2)], m = [ 0,1 | 1,0 ]} & $d$ & 8\\
\kn{12n}{184} & 43 & 2 & {\footnotesize [D: (2,1) (3,2)] $\cup$ [D: (3,2) (5,3)], m = [ 0,1 | 1,0 ]} & $d$ & 7\\
\kn{13n}{125} & 59 & -2 & {\footnotesize [D: (2,1) (5,3)] $\cup$ [D: (3,2) (5,3)], m = [ 0,1 | 1,0 ]} & $d$ & 8\\
\kn{13n}{315} & 53 & 0 & {\footnotesize [D: (2,1) (3,2)] $\cup$ [D: (2,1) (9,7)], m = [ 0,1 | 1,0 ]} & $d$ & 7\\
\kn{13n}{560} & 27 & -2 & {\footnotesize [S2: (3,1) (5,2) (12,-7)]} & $\mathrm{HF}_{\mathrm{red}}$ & 8\\
\kn{13n}{567} & 31 & -2 & {\footnotesize [D: (2,1) (3,1)] $\cup$ [D: (3,1) (5,2)], m = [ 1,1 | 1,0 ]} & $d$ & 9\\
\kn{13n}{572} & 9 & 0 & {\footnotesize [S2: (3,1) (5,2) (6,-5)]} & $\mathrm{HF}_{\mathrm{red}}$ & 5\\
\kn{13n}{574} & 39 & -2 & {\footnotesize [S2: (3,1) (5,2) (9,-4)]} & $d$ & 9\\
\kn{13n}{611} & 9 & 0 & {\footnotesize [D: (2,1) (3,2)] $\cup$ [D: (3,1) (3,2)], m = [ -1,2 | 0,1 ]} & $d$ & 8\\
\kn{13n}{612} & 9 & 0 & {\footnotesize [D: (2,1) (3,2)] $\cup$ [D: (3,1) (3,2)], m = [ -1,2 | 0,1 ]} & $d$ & 8\\
\kn{13n}{751} & 25 & 0 & {\footnotesize [D: (2,1) (3,2)] $\cup$ [D: (5,2) (5,3)], m = [ 0,1 | 1,0 ]} & $d$ & 8\\
\kn{13n}{761} & 69 & 0 & {\footnotesize [D: (2,1) (3,2)] $\cup$ [D: (4,3) (5,3)], m = [ 0,1 | 1,0 ]} & $d$ & 7\\
\kn{13n}{860} & 77 & 0 & {\footnotesize [D: (2,1) (3,2)] $\cup$ [D: (3,2) (7,5)], m = [ 0,1 | 1,0 ]} & $d$ & 7\\
\kn{13n}{931} & 27 & 2 & {\footnotesize [S2: (2,1) (3,1) (3,2) (7,-12)]} & $d$ & 7\\
\kn{13n}{968} & 69 & 0 & {\footnotesize [S2: (2,1) (3,1) (3,1) (7,-12)]} & $d$ & 6\\
\kn{13n}{1002} & 43 & -2 & {\footnotesize [S2: (2,1) (3,1) (29,-17)]} & $d$ & 8\\
\kn{13n}{1021} & 7 & -2 & {\footnotesize [S2: (2,1) (3,1) (17,-13)]} & $d$ & 8\\
\kn{13n}{1059} & 27 & 2 & {\footnotesize [S2: (2,1) (3,1) (3,2) (7,-12)]} & $d$ & 7\\
\kn{13n}{1062} & 27 & -2 & {\footnotesize [S2: (2,1) (3,1) (3,2) (7,-12)]} & $d$ & 7\\
\kn{13n}{1083} & 69 & 0 & {\footnotesize [S2: (2,1) (3,1) (3,1) (7,-12)]} & $d$ & 6\\
\kn{13n}{1133} & 15 & -2 & {\footnotesize [S2: (4,1) (5,1) (5,-3)]} & $\mathrm{HF}_{\mathrm{red}}$ & 5\\
\kn{13n}{1226} & 7 & 2 & {\footnotesize [D: (2,1) (3,2)] $\cup$ [D: (3,1) (7,4)], m = [ 0,1 | 1,0 ]} & $d$ & 9\\
\kn{13n}{1231} & 29 & 0 & {\footnotesize [D: (2,1) (3,2)] $\cup$ [D: (3,1) (5,4)], m = [ 0,1 | 1,0 ]} & $d$ & 7\\
\kn{13n}{1236} & 35 & -2 & {\footnotesize [D: (2,1) (3,2)] $\cup$ [D: (3,2) (7,3)], m = [ 0,1 | 1,0 ]} & $d$ & 7\\
\kn{13n}{1286} & 33 & 0 & {\footnotesize [S2: (2,1) (3,1) (3,1) (7,-10)]} & $d$ & 6\\
\kn{13n}{1287} & 9 & 0 & {\footnotesize [S2: (2,1) (3,1) (3,2) (7,-11)]} & $d$ & 8\\
\kn{13n}{1313} & 9 & 0 & {\footnotesize [S2: (2,1) (3,1) (3,2) (7,-11)]} & $d$ & 8\\
\kn{13n}{1314} & 33 & 0 & {\footnotesize [S2: (2,1) (3,1) (3,1) (7,-10)]} & $d$ & 6\\
\kn{13n}{1316} & 9 & 0 & {\footnotesize [S2: (2,1) (3,1) (3,2) (7,-11)]} & $d$ & 8\\
\kn{13n}{1354} & 35 & -2 & {\footnotesize [S2: (5,2) (5,2) (8,-5)]} & $d$ & 8\\
\kn{13n}{1435} & 9 & 0 & {\footnotesize [S2: (2,1) (7,2) (11,-8)]} & $d$ & 9\\
\kn{13n}{1552} & 93 & 0 & {\footnotesize [D: (3,2) (3,2)] $\cup$ [D: (3,2) (5,3)], m = [ 0,1 | 1,0 ]} & $d$ & 7\\
\kn{13n}{1588} & 23 & 2 & {\footnotesize [S2: (2,1) (7,2) (11,-7)]} & $d$ & 9\\
\kn{13n}{2066} & 7 & 2 & {\footnotesize [S2: (2,1) (5,1) (9,-7)]} & $\mathrm{HF}_{\mathrm{red}}$ & 5\\
\kn{13n}{2276} & 19 & 2 & {\footnotesize [D: (2,1) (3,1)] $\cup$ [D: (3,2) (7,5)], m = [ 0,1 | 1,0 ]} & $d$ & 8\\
\kn{13n}{2296} & 21 & 0 & {\footnotesize [S2: (3,1) (4,1) (9,-7)]} & $\mathrm{HF}_{\mathrm{red}}$ & 5\\
\kn{13n}{2559} & 47 & -2 & {\footnotesize [S2: (2,1) (7,3) (9,-5)]} & $d$ & 6\\
\kn{13n}{2638} & 31 & -2 & {\footnotesize [S2: (3,1) (4,1) (13,-5)]} & $d$ & 9\\
\kn{13n}{2950} & 9 & 0 & {\footnotesize [D: (3,1) (3,2)] $\cup$ [D: (3,1) (4,3)], m = [ 0,1 | 1,0 ]} & $d$ & 8\\
\kn{13n}{3163} & 19 & 2 & {\footnotesize [S2: (3,1) (5,1) (7,-5)]} & $\mathrm{HF}_{\mathrm{red}}$ & 5\\
\kn{13n}{3547} & 11 & -2 & {\footnotesize [S2: (3,1) (5,1) (7,-3)]} & $d$ & 10\\
\end{longtable}
\begin{longtable}{lrrllr}
\caption{New exact values $u(K)=3$ (previous range $[2,3]$). Owens's theorem with normal-form correction terms excludes two crossing changes of opposite signs, and Traczyk's criterion excludes two changes of the same sign (Section~\ref{sec:u2}).}\label{tab:new-u3}\\
\toprule
Knot & $\det$ & $\sigma$ & $\dbc(K)$ (Regina) & Obstruction & $|V|$\\
\midrule
\endfirsthead
\multicolumn{6}{@{}l@{}}{\emph{New exact values $u(K)=3$, continued from the previous page.}}\\[2pt]
\toprule
Knot & $\det$ & $\sigma$ & $\dbc(K)$ (Regina) & Obstruction & $|V|$\\
\midrule
\endhead
\bottomrule
\multicolumn{6}{@{}r@{}}{\emph{continued on the next page}}\\
\endfoot
\bottomrule
\endlastfoot
\kn{13n}{2933} & 99 & 2 & {\footnotesize [S2: (3,1) (3,1) (3,2) (5,-3)]} & Owens $(1,1)$ + Traczyk & 8\\
\kn{13n}{2956} & 99 & -2 & {\footnotesize [S2: (3,1) (3,1) (3,2) (5,-3)]} & Owens $(1,1)$ + Traczyk & 8\\
\end{longtable}
\begin{table}[htbp]
\centering
\setlength{\aboverulesep}{0pt}\setlength{\belowrulesep}{0pt}\renewcommand{\arraystretch}{1.05}
\caption{Knots with $u(K)=u_a(K)=3$, listed down the columns.}\label{tab:u3-66}
\begin{tabular}{lrrl@{\quad}|@{\quad}lrrl}
\toprule
Knot & $\det$ & $\sigma$ & $H_1(\dbc(K))$ & Knot & $\det$ & $\sigma$ & $H_1(\dbc(K))$\\
\midrule
\kn{13a}{290} & 325 & 0 & $\bbz/5 \oplus \bbz/65$ & \kn{13n}{3005} & 153 & -4 & $\bbz/3 \oplus \bbz/51$\\
\kn{13a}{336} & 325 & 0 & $\bbz/5 \oplus \bbz/65$ & \kn{13n}{3099} & 125 & -4 & $\bbz/5 \oplus \bbz/25$\\
\kn{13a}{1665} & 425 & 0 & $\bbz/5 \oplus \bbz/85$ & \kn{13n}{3127} & 75 & -2 & $\bbz/5 \oplus \bbz/15$\\
\kn{13a}{1804} & 325 & 0 & $\bbz/5 \oplus \bbz/65$ & \kn{13n}{3231} & 125 & -4 & $\bbz/5 \oplus \bbz/25$\\
\kn{13a}{1829} & 375 & 2 & $\bbz/5 \oplus \bbz/75$ & \kn{13n}{3253} & 169 & -4 & $\bbz/13 \oplus \bbz/13$\\
\kn{13a}{2079} & 275 & 2 & $\bbz/5 \oplus \bbz/55$ & \kn{13n}{3329} & 125 & 4 & $\bbz/5 \oplus \bbz/25$\\
\kn{13a}{2181} & 275 & -2 & $\bbz/5 \oplus \bbz/55$ & \kn{13n}{3446} & 169 & -4 & $\bbz/13 \oplus \bbz/13$\\
\kn{13a}{2277} & 425 & 0 & $\bbz/5 \oplus \bbz/85$ & \kn{13n}{3525} & 189 & 4 & $\bbz/3 \oplus \bbz/63$\\
\kn{13a}{2381} & 375 & -2 & $\bbz/5 \oplus \bbz/75$ & \kn{13n}{3546} & 25 & -4 & $\bbz/5 \oplus \bbz/5$\\
\kn{13a}{2507} & 175 & -2 & $\bbz/5 \oplus \bbz/35$ & \kn{13n}{3570} & 169 & -4 & $\bbz/13 \oplus \bbz/13$\\
\kn{13a}{2594} & 325 & 0 & $\bbz/5 \oplus \bbz/65$ & \kn{13n}{3695} & 125 & -4 & $\bbz/5 \oplus \bbz/25$\\
\kn{13a}{2933} & 375 & -2 & $\bbz/5 \oplus \bbz/75$ & \kn{13n}{3718} & 45 & -4 & $\bbz/3 \oplus \bbz/15$\\
\kn{13a}{3118} & 475 & -2 & $\bbz/5 \oplus \bbz/95$ & \kn{13n}{3720} & 175 & 2 & $\bbz/5 \oplus \bbz/35$\\
\kn{13a}{3131} & 375 & -2 & $\bbz/5 \oplus \bbz/75$ & \kn{13n}{3902} & 75 & -2 & $\bbz/5 \oplus \bbz/15$\\
\kn{13a}{3302} & 425 & 0 & $\bbz/5 \oplus \bbz/85$ & \kn{13n}{3923} & 75 & -2 & $\bbz/5 \oplus \bbz/15$\\
\kn{13a}{3407} & 425 & 0 & $\bbz/5 \oplus \bbz/85$ & \kn{13n}{4262} & 175 & 2 & $\bbz/5 \oplus \bbz/35$\\
\kn{13a}{3581} & 325 & 0 & $\bbz/5 \oplus \bbz/65$ & \kn{13n}{4401} & 117 & -4 & $\bbz/3 \oplus \bbz/39$\\
\kn{13a}{3613} & 325 & 0 & $\bbz/5 \oplus \bbz/65$ & \kn{13n}{4674} & 125 & 0 & $\bbz/5 \oplus \bbz/25$\\
\kn{13a}{4030} & 375 & 2 & $\bbz/5 \oplus \bbz/75$ & \kn{13n}{4695} & 125 & -4 & $\bbz/5 \oplus \bbz/25$\\
\kn{13n}{714} & 125 & 0 & $\bbz/5 \oplus \bbz/25$ & \kn{13n}{4696} & 25 & 4 & $\bbz/5 \oplus \bbz/5$\\
\kn{13n}{989} & 75 & 2 & $\bbz/5 \oplus \bbz/15$ & \kn{13n}{4735} & 125 & -4 & $\bbz/5 \oplus \bbz/25$\\
\kn{13n}{1662} & 117 & 4 & $\bbz/3 \oplus \bbz/39$ & \kn{13n}{4771} & 175 & 2 & $\bbz/5 \oplus \bbz/35$\\
\kn{13n}{1992} & 75 & -2 & $\bbz/5 \oplus \bbz/15$ & \kn{13n}{4783} & 125 & 4 & $\bbz/5 \oplus \bbz/25$\\
\kn{13n}{2034} & 75 & 2 & $\bbz/5 \oplus \bbz/15$ & \kn{13n}{4800} & 117 & -4 & $\bbz/3 \oplus \bbz/39$\\
\kn{13n}{2245} & 175 & 2 & $\bbz/5 \oplus \bbz/35$ & \kn{13n}{4805} & 117 & -4 & $\bbz/3 \oplus \bbz/39$\\
\kn{13n}{2305} & 25 & 4 & $\bbz/5 \oplus \bbz/5$ & \kn{13n}{4831} & 175 & -2 & $\bbz/5 \oplus \bbz/35$\\
\kn{13n}{2526} & 25 & -4 & $\bbz/5 \oplus \bbz/5$ & \kn{13n}{4870} & 175 & -2 & $\bbz/5 \oplus \bbz/35$\\
\kn{13n}{2550} & 45 & -4 & $\bbz/3 \oplus \bbz/15$ & \kn{13n}{4903} & 169 & -4 & $\bbz/13 \oplus \bbz/13$\\
\kn{13n}{2688} & 125 & 0 & $\bbz/5 \oplus \bbz/25$ & \kn{13n}{4927} & 153 & 4 & $\bbz/3 \oplus \bbz/51$\\
\kn{13n}{2691} & 189 & -4 & $\bbz/3 \oplus \bbz/63$ & \kn{13n}{4954} & 125 & 4 & $\bbz/5 \oplus \bbz/25$\\
\kn{13n}{2791} & 75 & 2 & $\bbz/5 \oplus \bbz/15$ & \kn{13n}{5008} & 225 & -4 & $\bbz/5 \oplus \bbz/45$\\
\kn{13n}{2833} & 125 & 4 & $\bbz/5 \oplus \bbz/25$ & \kn{13n}{5076} & 225 & -4 & $\bbz/15 \oplus \bbz/15$\\
\kn{13n}{3004} & 45 & -4 & $\bbz/3 \oplus \bbz/15$ & \kn{13n}{5085} & 175 & -2 & $\bbz/5 \oplus \bbz/35$\\
\bottomrule
\end{tabular}
\end{table}
\FloatBarrier

\end{document}